\newcommand{\bA}{{\mathbb{A}}}
\newcommand{\bC}{{\mathbb{C}}}
\newcommand{\bN}{{\mathbb{N}}}
  \newcommand{\A}{{\mathcal{A}}}
  \newcommand{\E}{{\mathcal{E}}}
\renewcommand{\L}{{\mathcal{L}}}
\renewcommand{\P}{{\mathcal{P}}}
  \newcommand{\X}{{\mathcal{X}}}
  \newcommand{\Y}{{\mathcal{Y}}}
\renewcommand{\a}{\alpha}
\newcommand{\ol}{\overline}
\renewcommand{\phi}{\varphi}
\newcommand{\upchi}{{\raise.35ex\hbox{$\chi$}}}
\newcommand{\osc}[1]{\accentset{\circ}{#1}}
\newcounter{exerci}[section]
\renewcommand{\theexerci}{\arabic{exerci}.}
\title{How Free-Free-Boolean Independence Arises in Bi-Free Probability}
\author{Daniel Pepper}
\address{Department of Mathematics and Statistics, York University, 4700 Keele Street, Toronto, Ontario, M3J 1P3, Canada}
\email{daniel.pepper2@gmail.com}
\subjclass[2020]{46L53, 46L54}
\keywords{Bi-free probability, Free Independence, Boolean Independence, Bi-Free Probability over Matrices}
\date{\today}
\thanks{This research was completed with the support of NSERC (Canada) grant RGPIN-2024-06269.}
\newtheorem{theorem}{Theorem}[section]
\newtheorem{lemma}[theorem]{Lemma}
\newtheorem{proposition}[theorem]{Proposition}
\theoremstyle{definition}
\newtheorem{definition}[theorem]{Definition}
\newtheorem{example}[theorem]{Example}
\newtheorem{remark}[theorem]{Remark}
\newtheorem{notation}[theorem]{Notation}
\newtheorem{remandnot}[theorem]{Remark and Notation}
\begin{document}
\thispagestyle{empty}

\begin{abstract}
This work concerns notions of multi-algebra independence introduced by Liu and how they can be studied in the context of bi-free probability. In particular, we show how the free-free-Boolean independence for triples of algebras can be embedded into and therefore studied from a lens of bi-free probability. It is also shown how its cumulants can be constructed from the bi-free cumulants.
\end{abstract}

\maketitle

\section{Introduction}

The advent of free probability in the 1980's, owing its existence to Voiculescu, brought with it a fertile ground for many rich inquiries into the depths of non-commutative probability. One such question that naturally arose was `\textit{What other kinds of non-commutative probability are there and how are they related to each other?}'  It is an extended branch of this question that this work concerns. 

A hallmark of probability is in its notion of independence. In non-commutative probability theory, independence shows how joint moments in independent algebras is built from the individual algebras. Speicher showed in \cite{Spe97} that there are only five types of this independence. Classical independence, free independence, and Boolean independence are the three symmetric types, while there are two antisymmetric independences called monotone and anti-monotone independence. Each of these can be seen via an action of operators on a corresponding product space. For example, the classical independence arises from an action on a tensor product space, while the free independence arises from an action on a free product space.

Voiculescu originally considered only one-sided actions on the free product in his development of free probability, but later he noticed that he could simultaneously consider both left and right sided actions, giving birth to bi-free independence for pairs of algebras in \cite{Voi14}. He noticed that both classical independence and free independence could be studied through bi-free probability, and conjectured that Boolean independence could also be modeled therein. This conjecture was answered affirmatively by Skoufranis in \cite{Sko14}. Skoufranis also showed that the monotone independence for a pair of algebras could be modeled in a similar fashion. 

The success of bi-free probability opened up questions of what other types of independences exist for tuples of algebras. In \cite{Liu19}, Liu defined several such notions, with emphasis on a free-Boolean independence for pairs of algebras, and a free-free-Boolean independence for triples of algebras in \cite{liu17}. Voiculescu's question then extends to this setting. Can these multi-algebra independences be studied through bi-free probability as well? This work answers this question in the affirmative for the free-free-Boolean (and consequently for the free-Boolean) independence with amalgamation over an algebra $B$. We extend the ideas of Skoufranis in \cite{Sko14} to see how. 

Apart from the introduction, this paper is split into five sections. Section 2 introduces the basic structures of non-commutative probability with amalgamation. Abstractly, these take the form of $B$-$B$-non-commutative probability spaces which act on $B$-$B$-bimodules with specified $B$-projections. Free product modules are recalled to make sense out of bi-free independence. We define the Boolean projections on the free product, which form a key part of the definition of Liu's free-free-Boolean independence. 

Section 3 discusses a very useful calculus arising from the incidence algebra on the so-called bi-non-crossing partitions. The LR diagram calculus from \cite{CNS15} is called upon to handle vector components of actions, and combined with our Boolean projections. These combinatorial tools are a staple of many non-commutative probability techniques.

Section 4 serves as a very brief introduction to free-free-Boolean independence with amalgamation. Originally defined in the non-amalgamated setting in \cite{liu17}, we extend the definition to the amalgamated case and mention a couple of quick facts. 

In section 5 we tackle the main result of this paper, which shows that free-free-Boolean independent families of triples of algebras can be embedded into, and therefore studied through, bi-free probability. To do so, a bi-free family with a few extra moment conditions are leveraged to house free-free-Boolean independent families. An explicit representation is discussed, and we demonstrate that the bi-free cumulants can be used to define free-free-Boolean cumulants.

Section 6 concludes this work with a brief comment on Liu's other independences.

\section{Preliminaries on $B$-valued probability and Bi-Free Independence}

\subsection{Non-commutative Probability Spaces and Bimodules}

Throughout this work, we denote by $B$, a unital algebra over $\bC$. In $B$-valued probability theory, we concern ourselves with algebras of random variables and $B$-valued distributions. More formally, we can define a $B$-valued probability space as follows.

\begin{definition}
    A $B$-valued probability space is a pair $(\A, E)$ where $\A$ is an algebra over $B$ and $E : \A \to B$ is a linear map such that
    \[ E(b_1 a b_2) = b_1 E(a) b_2 \]
    for all $a \in \A$ and $b_1, b_2 \in B$.
\end{definition}

As the notation might suggest, we often call elements of $\A$ \textit{non-commutative random variables} and $E$ an \textit{expectation} onto the algebra $B$.

We can consider elements of $\A$ as operators, most simply, if we think of these elements as acting on $\A$ by left (or right) multiplication. This turns $\A$ into a $B$-$B$-bimodule, and embeds $\A$ into the space of linear operators on $\A$. The expectation returns to us in the form of a projection onto $B$.

\begin{definition}
    A $B$-$B$-bimodule with a specified $B$-projection is a triple $(\X, \osc{\X}, p)$ where $\X$ is a direct sum of $B$-$B$-bimodules $\X = B \oplus \osc{\X}$ and $p : \X \to B$ is the projection (with kernel $\osc{\X}$) 
    \[ p(b \oplus \eta) = b.\]
Let $\L(\X)$ denote the algebra of $\bC$-linear maps on $\X$. We have an \textit{expectation}, $E_{\L(\X)} : \L(\X) \to B$ defined by 
    \[ E_{\L(\X)}(a) = p(a(1_B \oplus 0)) = pa1_B.\]
\end{definition}

One can check that starting with a $B$-$B$-bimodule with specified projection $(\X, \osc{\X}, p)$, we get for free a $B$-valued probability space $(\L(\X), E_{\L(\X)})$. As alluded to above, the same is true in reverse; starting with a $B$-valued probability space $(\A, E)$, we can create a $B$-$B$-bimodule with specified projection, $(\A, \ker(E), E)$. This correspondence between $B$-valued probability and $B$-$B$-bimodules lies at the heart of many of our investigations that follow.

$\ $

Now, in order to talk about pairs of faces and bi-free probability, we need to be able to discuss left and right acting operators. These behave slightly differently when we consider the algebra $B$ over which we are amalgamating. For example, if $b\in B$, let $L_b$ and $R_b$ denote the action on a $B$-$B$-bimodule $\X$ by left and right multiplication by $b \in B$ respectively, we can consider the left algebra of $\L(\X)$ to be 
\[ \L_\ell(\X) := \{ T \in \L(\X) \ | \ TR_b = R_bT \} \]
and the right algebra of $L(\X)$ to be
\[ \L_r(\X) := \{ T \in \L(X) \ | \ TL_b = L_bT \}.\]
In other words, left-acting operators should commute with right multiplication by $B$, and right acting operators should commute with left-multiplication by $B$. Motivated by this, we look at the following abstraction, that of a $B$-$B$-non-commutative probability space.

\begin{definition}
    A \textit{$B$-$B$-non-commutative probability space} is a triple $(\A, E_\A, \varepsilon)$ where $\A$ is a unital algebra, $\varepsilon : B \otimes B^{\text{op}} \to \A$ is a unital homomorphism such that $\varepsilon|_{B \otimes 1_B}$ and $\varepsilon|_{1_B \otimes B}$ are injective, and $E_\A : \A \to B$ is a linear map such that
    \[ E_\A( \varepsilon(b_1 \otimes b_2) T) = b_1 E_\A(T) b_2\]
    for all $b_1, b_2 \in B$ and $T \in \A$, and 
    \[ E_\A( T\varepsilon(b \otimes 1_B)) = E_\A(T\varepsilon(1_B \otimes b))\]
    for all $b \in B$ and $T \in \A$.

    We define the \textit{left} and \textit{right algebras} of $\A$ respectively by
    \[ \A_\ell := \{ T \in \A \  | \ T \varepsilon(1_B \otimes b) = \varepsilon(1_B \otimes b) T \text{ for all } b \in B\}\]
    and 
    \[ \A_r := \{ T \in \A \ | \  T \varepsilon(b \otimes 1_B) = \varepsilon(b \otimes 1_B) T \text{ for all } b \in B\}.\]
    Often, we use $L_b$ to denote $\varepsilon(b \otimes 1_B)$ and $R_b$ to denote $\varepsilon(1_B \otimes b)$ for simplicity. 
\end{definition}

Previously we saw a connection between $B$-valued probability spaces and $B$-$B$-bimodules with specified projections. This connection can be refined in the case of $B$-$B$-non-commutative probability spaces, with a bit more attention to detail. 

\begin{theorem}[\cite{CNS15}, Theorem 3.2.4] \label{faithfulbimodule} 
    Let $(\A, E_\A, \varepsilon)$ be a $B$-$B$-non-commutative probability space. Then there is a $B$-$B$-bimodule with a specified $B$-projection $(\X, \osc{\X}, p)$ and a unital homomorphism $\theta : \A \to \L(\X)$ such that $\theta(L_{b_1}R_{b_2}) = L_{b_1}R_{b_2}$,
    \[ \theta(\A_\ell) \subseteq \L_\ell(\X), \quad \theta(\A_r) \subseteq L_r(\X), \quad \text{ and } \quad E_{\L(\X)}(\theta(T))= E_\A(T) \]
    for all $b_1, b_2 \in B$ and $T \in \A$. 
\end{theorem}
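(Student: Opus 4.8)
The plan is to realize $\A$ concretely by a left-regular (GNS-type) action of $\A$ on a modification of itself. As the underlying vector space I take $\X = \A$, set $\osc{\X} := \ker E_\A$ and $p := E_\A$, so that $p$ is a surjection onto $B$ with kernel $\osc{\X}$ (using that $E_\A$ is unital, i.e. $E_\A(1_\A) = 1_B$, which is forced by the defining identity of $E_\A$ together with injectivity of $\varepsilon|_{B \otimes 1_B}$). The homomorphism will be the left-regular action $\theta(T)a = Ta$, and the candidate left and right $B$-actions on $\X$ are $L_b = \varepsilon(b \otimes 1_B)\,\cdot$ and $R_b = \varepsilon(1_B \otimes b)\,\cdot$. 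Since $\varepsilon$ is a homomorphism on $B \otimes B^{\mathrm{op}}$, these two actions commute, so $\X$ is at least a $B$-$B$-bimodule; moreover left multiplication by $\varepsilon(b_1 \otimes b_2)$ is literally the operator $L_{b_1} R_{b_2}$, which yields $\theta(L_{b_1}R_{b_2}) = L_{b_1}R_{b_2}$ directly.

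With this set-up the two containments and the expectation identity are the routine part. For $\theta(\A_\ell) \subseteq \L_\ell(\X)$: if $T \in \A_\ell$, i.e. $T\varepsilon(1_B \otimes b) = \varepsilon(1_B \otimes b)T$, then $\theta(T)R_b a = T\varepsilon(1_B \otimes b)a = \varepsilon(1_B \otimes b)Ta = R_b\theta(T)a$, so $\theta(T)$ commutes with every $R_b$ and hence lies in $\L_\ell(\X)$; the inclusion $\theta(\A_r) \subseteq \L_r(\X)$ is identical with the roles of $\varepsilon(b \otimes 1_B)$ and $\varepsilon(1_B \otimes b)$ interchanged. The defining property $E_\A(\varepsilon(b_1 \otimes b_2)T) = b_1 E_\A(T) b_2$ shows at once that $\varepsilon(b \otimes 1_B)$ and $\varepsilon(1_B \otimes b)$ preserve $\ker E_\A$, so $\osc{\X}$ is a sub-bimodule, and $E_{\L(\X)}(\theta(T)) = p(\theta(T)1_\A) = E_\A(T)$.

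The genuinely delicate step — and the one I expect to be the main obstacle — is arranging the decomposition $\X = B \oplus \osc{\X}$ as a direct sum of $B$-$B$-bimodules in which the summand $B$ carries its standard bimodule structure $b' \mapsto b b' b''$. The difficulty is exactly the bi-free feature that both the left action $\varepsilon(\cdot \otimes 1_B)$ and the right action $\varepsilon(1_B \otimes \cdot)$ are implemented by left multiplication in $\A$, and these differ; consequently the naive copy $\varepsilon(B \otimes 1_B)1_\A$ of $B$ inside $\A$ is not invariant under $R_b$, so it cannot simply be split off, and one checks that no literal realization on $\A$ reconciles the clean bimodule operator $L_{b_1}R_{b_2}$ with the plain regular action on such a copy. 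My plan to resolve this is to not take $\X$ to be $\A$ verbatim but to build $\X = B \oplus \osc{\X}$ as an external direct sum carrying a distinguished vacuum $1_B \oplus 0$ on which the left and right $B$-actions agree, identify it linearly with $\A$ via $c \oplus \eta \mapsto \varepsilon(c \otimes 1_B) + \eta$, and then define $\theta$ so that it is simultaneously an algebra homomorphism and acts as the clean operator $L_{b_1}R_{b_2}$ on the $B$-summand. The key point, where the real work lies, is that this forces $\theta$ to deviate from the plain regular action precisely on the vacuum, and verifying that such a $\theta$ is well defined and consistent is exactly where the second defining identity $E_\A(T\varepsilon(b \otimes 1_B)) = E_\A(T\varepsilon(1_B \otimes b))$ and the injectivity of $\varepsilon|_{B \otimes 1_B}$ and $\varepsilon|_{1_B \otimes B}$ become indispensable: they are the hypotheses that reconcile the two copies of $B$ on the vacuum and make $p$ a genuine bimodule map. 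Once this compatibility is established, all four assertions follow as in the first two paragraphs.
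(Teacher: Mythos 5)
You have correctly isolated the crux (the copy of $B$ inside $\A$ is not invariant under the right $B$-action, so $\A$ itself does not split as $B \oplus \osc{\X}$ of bimodules), and your first two paragraphs match the routine part of the argument. But your proposed resolution diverges from the paper's at exactly this point, and it does not work. The missing idea is a \emph{quotient}: in the construction the paper records (following Theorem 3.2.4 of \cite{CNS15}), the reduced part of the module is not $\ker E_\A$ but
\[ \osc{\X} \;=\; \ker(E_\A)\,/\,\operatorname{span}\{TL_b - TR_b \;:\; T \in \A,\ b \in B\}, \]
with $\theta$ given by the regular action followed by the quotient map $q$. The second defining identity $E_\A(TL_b) = E_\A(TR_b)$ guarantees only that $\operatorname{span}\{TL_b - TR_b\}$ lies inside $\ker E_\A$, so that the quotient makes sense; the identification of $TL_b$ with $TR_b$ is then \emph{imposed} by the quotient, giving $q(TL_b - L_{E_\A(TL_b)}) = q(TR_b - L_{E_\A(TR_b)})$, which is what reconciles the two copies of $B$ and makes $\theta(L_{b_1}R_{b_2})$ equal to the bimodule operator. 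Your plan explicitly keeps $\osc{\X} = \ker E_\A$ un-quotiented and tries to repair matters by changing $\theta$ only on the $B$-summand.

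That vacuum-only modification cannot succeed in general. Since $\theta$ must be a homomorphism with $\theta(L_b)$ and $\theta(R_b)$ equal to the bimodule operators, both send $1_B \oplus 0$ to $b \oplus 0$, hence $\theta(TL_b)$ and $\theta(TR_b)$ are forced to agree on the vacuum for every $T$. Now take $S \in \ker E_\A$: by the expectation property, $\theta(S)(1_B \oplus 0) = 0 \oplus \eta_S$ lies in $0 \oplus \osc{\X}$, where by your own prescription every $\theta(T)$ acts by the plain regular action. Multiplicativity applied on the vacuum then forces $E_\A(T\eta_S) = E_\A(TS)$ and $\eta_{TS} = T\eta_S - L_{E_\A(TS)}$ for all $T \in \A$, together with $\eta_{TL_b} = \eta_{TR_b}$; in other words, $S \mapsto \eta_S$ must be an $\A$-module endomorphism of $\ker E_\A$ (for the action $T \cdot X = TX - L_{E_\A(TX)}$) that kills $\operatorname{span}\{TL_b - TR_b\}$ while agreeing with the identity modulo $\{X : E_\A(\A X) = 0\}$. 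The hypotheses you invoke do not produce such a map: the second identity says only that $TL_b - L_{E_\A(TL_b)}$ and $TR_b - L_{E_\A(TR_b)}$ have the same expectation data, not that they are equal in $\ker E_\A$ (they differ by $TL_b - TR_b \neq 0$), and constructing $\eta$ amounts to a module-theoretic splitting of $\ker E_\A \to \ker E_\A/\operatorname{span}\{TL_b - TR_b\}$, which has no reason to exist in general. The quotient is precisely the device that makes these elements equal, so the step you flag as ``where the real work lies'' is a genuine gap rather than a verification. (A smaller error: $E_\A(1_\A) = 1_B$ is not forced by the axioms --- $2E_\A$ satisfies them as well --- it is an implicit normalization needed for the theorem's conclusion.)
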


Because the details are necessary for later use, we mention the basic construction that verifies the above result.

Given $(\A, E_\A, \varepsilon)$, let 
\[ \X = B \oplus \left( \ker(E_\A)/\text{span}\{TL_b - TR_b | T \in \A, b \in B \} \right) .\]
From here, a unital homomorphism $\theta : \A \to \L(\X)$ is defined with the required properties by way of 
\[ \theta(T)(b) = E_\A(TL_b) \oplus q(TL_b -L_{E_\A(TL_b)})\]
and
\[ \theta(T)(q(A)) = E_\A(TA) \oplus q(TA -L_{E_\A(TA)})\]
for each $A \in \A$ with $E_\A(A) = 0$, $T \in \A$, and $b \in B$. Here $q$ is the quotient map 
\[ q : \ker E_\A \to \ker(E_\A)/\text{span}\{TL_b - TR_b | T \in \A, b \in B \}. \]

The $B$-$B$-bimodule structure on $\X$ is then defined by 
\[ b \cdot \xi = \theta(L_b)(\xi) \quad \text{ and } \quad \xi \cdot b = \theta(R_b)(\xi)\]
for all $b \in B$ and $\xi \in \X$.

\begin{remandnot} \label{bbbimoduleconstruction}
With notations as above, we can form a new $B$-$B$-bimodule with specified $B$-projection by taking a direct sum. In particular, let
\[ \Y = \X \oplus \X, \quad \text{ and } \quad \osc{\Y} = \osc{\X} \oplus \X.\]
So that $p_\Y(\xi_1 \oplus \xi_2) = p_\X(\xi_1) \oplus 0$ for all $\xi_1, \xi_2 \in \X$.

\end{remandnot}

\subsection{Free Product Modules and Bi-Free Independence}

With our basic structures in hand, we discuss the main reason why we call this \textit{probability} by highlighting the notion of \textit{independence} between algebras or random variables that we care about. Independence between algebras arises naturally from their actions on a corresponding product space. Let's consider the free product.

\begin{definition}

Let $\{(\X_k, \osc{\X}_k, p_k)\}_{k \in K}$ be a family of $B$-$B$-bimodules with specified $B$-projections. The \textit{reduced free product of $\{(\X_k, \osc{\X}_k, p_k)\}_{k \in K}$ with amalgamation over $B$} is defined to be the $B$-$B$-bimodule with specified $B$-projection $(\X, \osc{\X}, p)$, where $\osc{\X}$ is the $B$-$B$-bimodule defined by 
\[ \osc{\X} = \bigoplus_{n \geq 1} \bigoplus_{k_1 \neq k_2 \neq \cdots \neq k_n} \osc{\X}_{k_1} \otimes_B \cdots \otimes_B \osc{\X}_{k_n}.\]

For each $k \in K$, we collect the submodule generated by all those tensor words which do not start with elements of $\osc{\X}_k$, and denote it by
\[ \X_{\ell}(k) = B \oplus \bigoplus_{n \geq 1} \bigoplus_{\substack{ k_1 \neq k_2 \neq \cdots \neq k_n \\ k_1 \neq k}} \osc{\X}_{k_1} \otimes_B \cdots \otimes_B \osc{\X}_{k_n}, \]
and let $V_k$ be the natural isomorphism of $B$-$B$-bimodules
\[ V_k : \X \to \X_k \otimes_B \X_{\ell}(k). \]

Similarly, we can denote the submodule generated by all those tensor words which do not end with the elements of $\osc{\X}_k$. We denote this by
\[ \X_r(k) = B \oplus \bigoplus_{n \geq 1} \bigoplus_{\substack{k_1 \neq k_2 \neq \cdots \neq k_n \\ k_n \neq k}} \osc{\X}_{k_1} \otimes_B \cdots \otimes_B \osc{\X}_{k_n}. \]
Let also $W_k$ be the natural isomorphism of $B$-$B$-bimodules
\[ W_k : \X \to \X_r(k) \otimes_B \X_k\]

For each $k \in K$, denote by $\lambda_k : \L(\X_k) \to \L(\X)$ the unital homomorphism called the left regular representation of $\L(\X_k)$ on $\X$, defined by 
\[ \lambda_k(a) = V_k^{-1}(a \otimes \text{Id})V_k,\]
and $\rho_k : \L(\X_k) \to \L(\X)$ the unital homormorphism called the right regular representation of $\L(\X_k)$ on $\X$ defined by 
\[ \rho_k(a) = W_k^{-1}(\text{Id} \otimes a)W_k.\]
\end{definition}

Then, bi-free independence for pairs of $B$-faces can be defined as follows.

\begin{definition} 
    Let $(\A, E_\A, \varepsilon)$ be a $B$-$B$-non-commutative probability space. A \textit{pair of $B$-faces of $\A$} is a pair $(C,D)$ of unital subalgebras of $\A$ such that 
    \[ \varepsilon(B \otimes 1_B) \subseteq C \subseteq \A_\ell \quad \text{ and } \quad \varepsilon(1_B \otimes B) \subseteq D \subseteq A_r.\]
    A family $\{(C_k, D_k)\}_{k \in K}$ of pairs of $B$-faces of $\A$ is said to be \textit{bi-free over $B$} if there exist $B$-$B$-bimodules with specified $B$-projections $\{(\X_k, \osc{\X}_k, p_k)\}_{k \in K}$ and unital homomorphisms $\ell_k: C_k \to \L_\ell(\X_k)$, $r_k : D_k \to \L_r(\X_k)$ such that the joint distribution of $\{(C_k, D_k)\}_{k \in K}$ with respect to $E_\A$ is equal to the joint distribution of the family 
    \[ \{ ((\lambda_k \circ \ell_k)(C_k), (\rho_k \circ r_k)(D_k))\}_{k \in K}\]
    in $(\L(\X), E_{\L(\X)})$ where $\X$ is the reduced free product of the $\X_k$, and $\lambda_k$ and $\rho_k$ are respectively the left and right regular representations.
\end{definition}

\subsection{Boolean Projections on the Free Product}

One can define Boolean independence similarly through an action on a free-product space. But to do so, we must project onto a special submodule.

\begin{definition} \label{boolprojdef}
    Let $\{(\X_k, \osc{\X}_k, p_k)\}_{k \in K}$ be a family of $B$-$B$-bimodules with specified $B$-projections, and let $(\X, \osc{\X}, p)$ be their reduced free product. For each $k \in K$ we define the \textit{$k^{\text{th}}$ Boolean projection} $P_k : \X \to \X$, to be the natural projection onto $B \oplus \osc{\X}_k$.
\end{definition}

We collect a couple of easy facts regarding these Boolean projections. 

\begin{lemma} \label{boolprojfacts}
    With notations as above, fix $k \in K$. If $T \in \L(\X_k)$ then 
        \[ \lambda_k(T)P_k = P_k \lambda_k(T) \quad \text{ and } \quad \rho_k(T) P_k = P_k \rho_k(T),\]
    and
        \[ \lambda_k(T) P_k =  \rho_k(T) P_k.\]
\end{lemma}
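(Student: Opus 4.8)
The plan is to understand how $P_k$ interacts with the tensor factorizations $V_k$ and $W_k$, and then to read off the three identities from the leg-wise structure.

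First I would identify the range of $P_k$ inside each factorization. The submodule $B \oplus \osc{\X}_k$ is exactly the image under $V_k^{-1}$ of $\X_k \otimes_B B$, where $B$ now denotes the scalar summand of $\X_\ell(k)$: a scalar $b \in B \subseteq \X$ corresponds to $b \otimes_B 1_B$, a single letter $\eta \in \osc{\X}_k$ corresponds to $\eta \otimes_B 1_B$, and every remaining reduced word has nonscalar second leg. Writing $q_\ell : \X_\ell(k) \to \X_\ell(k)$ for the projection onto its scalar summand, this gives
\[ V_k P_k V_k^{-1} = \text{Id}_{\X_k} \otimes q_\ell. \]
The same argument applied to $W_k$, which instead splits off the last letter, yields $W_k P_k W_k^{-1} = q_r \otimes \text{Id}_{\X_k}$, with $q_r$ the projection of $\X_r(k)$ onto its scalar summand.

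With these descriptions in hand, the first two identities become formal. Conjugating $\lambda_k(T) = V_k^{-1}(T \otimes \text{Id})V_k$ by $V_k$ turns $\lambda_k(T)P_k$ and $P_k \lambda_k(T)$ into $(T \otimes \text{Id})(\text{Id} \otimes q_\ell)$ and $(\text{Id} \otimes q_\ell)(T \otimes \text{Id})$; since these factors act on different tensor legs they both equal $T \otimes q_\ell$, so $\lambda_k(T)P_k = P_k \lambda_k(T)$. The identical leg-wise commutation with $W_k$, $\rho_k(T) = W_k^{-1}(\text{Id} \otimes T)W_k$, and $q_r \otimes \text{Id}$ delivers $\rho_k(T)P_k = P_k \rho_k(T)$.

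For the third identity I would compare $\lambda_k(T)$ and $\rho_k(T)$ separately on $\ran(P_k) = B \oplus \osc{\X}_k$ and on $\ker(P_k)$. After $P_k$ both composites annihilate $\ker(P_k)$, so it suffices to match them on $\X_k := B \oplus \osc{\X}_k$. There the restriction of $V_k$ is $\xi \mapsto \xi \otimes 1_B$ and the restriction of $W_k$ is $\xi \mapsto 1_B \otimes \xi$, so for $\xi \in \X_k$ I compute $\lambda_k(T)\xi = V_k^{-1}(T\xi \otimes 1_B) = T\xi$ and $\rho_k(T)\xi = W_k^{-1}(1_B \otimes T\xi) = T\xi$, both read inside $\X$ via $\X_k \hookrightarrow \X$. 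Hence $\lambda_k(T)$ and $\rho_k(T)$ agree on $\ran(P_k)$, and as $P_k$ is the identity there, $\lambda_k(T)P_k = \rho_k(T)P_k$. The only genuinely nontrivial point — and thus the main obstacle — is the bookkeeping in the first paragraph: checking that $V_k$ and $W_k$ carry $B \oplus \osc{\X}_k$ precisely onto the scalar-second-leg (respectively scalar-first-leg) copy of $\X_k$. This rests entirely on the explicit action of the free-product isomorphisms on scalars and single-letter words; once it is in place, all three identities reduce to formal leg-wise computations.
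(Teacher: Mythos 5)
Your proof is correct and follows the same route as the paper: the commutation identities are exactly the ``follows from the definitions'' leg-wise computation via $V_k$ and $W_k$, and your third step is precisely the paper's observation that $P_k\eta \in \X_k$, where both $\lambda_k(T)$ and $\rho_k(T)$ act as $T$. You simply make explicit the bookkeeping (e.g.\ $V_kP_kV_k^{-1} = \mathrm{Id}\otimes q_\ell$) that the paper leaves implicit.
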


\begin{proof}
    The first claim follows easily from the definitions. For the second claim, note that for any $\eta \in \X$, we have $P_k \eta \in \X_k$ so that 
    \[ \lambda_k(T) P_k \eta = T P_k \eta = \rho_k(T) P_k \eta. \qedhere \]
\end{proof}

\section{LR diagrams and bi-free cumulants}

Instead of the abstract approach of looking for products of bimodules to see independence, we can instead look at patterns in moment calculations (that is, expectations of words) between independent algebras. This gets captured neatly in the combinatorial framework of bi-non-crossing partitions which act as a medium through which to define the bi-free cumulants. We will use these to define cumulants for the free-free-Boolean independence later.

\subsection{Bi-non-crossing partitions}

We begin with the combinatorial objects that encode our moment-cumulant calculus. Let $\P(n)$ denote the set of partitions of $\{1, \ldots, n\}$. Then $\P(n)$ is a lattice with the refinement order: say $\pi$ is \textit{finer} than $\sigma$ and write $\pi \leq \sigma$ if each block (set) of $\pi$ is contained in a block (set) of $\sigma$. For $\pi \in \P(n)$ denote by $\sim_\pi$ the equivalence relation on $\{1, \ldots, n\}$ corresponding to $\pi$. That is, $i \sim_\pi j$ if $i$ and $j$ are in the same block of $\pi$. A partition $\pi$ is said to be \textit{non-crossing} if whenever $a_1 < b_1 < a_2 < b_2$ with $a_1 \sim_\pi a_2$ and $b_1 \sim_\pi b_2$, then $a_1 \sim_\pi b_1$ (that is, all of $a_1, a_2, b_1, b_2$ must be in the same block of $\pi$). The set of non-crossing partitions, often denoted by $NC(n)$, is itself a lattice under the refinement order. 

Since we deal with left and right acting operators, we should pay attention to this. To do so, let $\upchi : \{ 1, \ldots, n\} \to \{ \ell, r\}$. We form a permutation associated with $\upchi$ which reorders the elements $\{1, \ldots, n\}$ by taking into account if they should be left or right operators. In particular, order the lefts in the usual way:
\[ \upchi^{-1}(\{\ell\}) = \{ i_1 < \cdots < i_p\},\]
and for the rights, we order them in reverse, as right acting operator multiplication gets reversed
\[ \upchi^{-1}(\{r\}) = \{ i_{p+1} > \cdots > i_n \}.\]
Now define our permutation $s_\chi$ by $s_\chi(j) = i_j$. It is convenient to put a new ordering, $\prec_\chi$ on $\{1, \ldots, n\}$ by $a \prec_\chi b$ if and only if $s_\chi^{-1}(a) < s_\chi^{-1}(b)$. This ordering is that which is induced by $s_\chi$. 

Then $s_\chi$ is an order-isomorphism from $(\{1, \ldots, n\}, <)$ to $(\{1,\ldots, n\}, \prec_\chi)$. So we can use this to transform the non-crossing partitions into the so-called \textit{bi-non-crossing} partitions.

\begin{definition}
    A partition $\pi \in \P(n)$ is said to be \textit{bi-non-crossing} with respect to $\upchi$ if the permutation $s_\chi^{-1} \cdot \pi$ is non-crossing. Denote by $BNC(\upchi)$ the set of bi-non-crossing partitions with respect to $\upchi$. 
\end{definition}

It is a simple observation to note that $s_\chi$ upgrades to a natural order-isomorphism of $\P(n)$, so that $BNC(\upchi)$ is isomorphic to $NC(n)$, granting us that $BNC(\upchi)$ is itself a lattice. 

There is a nice visual representation associated to each of these partitions, a \textit{bi-non-crossing diagram}. Let $\pi \in BNC(\chi)$, and create its bi-non-crossing diagram as follows. Along two parallel vertical dashed lines, place nodes labeled $1$ to $n$ from top to bottom, such that the nodes on the left line correspond to those values $k$ such that $\chi(k) = \ell$ and the nodes on the right correspond to those values of $k$ such that $\chi(k) = r$. Connect nodes together whose labels share a $\pi$ block with lines. This can be done in a way so that lines from different blocks do not cross.

Here is a simple example. Let $\chi : \{1, 2,3,4,5, 6\} \to \{\ell, r\}$ such that $\chi^{-1}(\{ \ell \}) = \{1, 3, 4, 5\}$, $\chi^{-1}(\{r\}) = \{2,6\}$, let $\pi = \{ \{ 1, 2, 5, 6\}, \{3,4\} \}$ and $\sigma = \{\{ 1, 4, 5 , 6\}, \{2, 3 \}\}$. Then $\pi$ is bi-non-crossing with respect to $\chi$, while $\sigma$ is not. This is visually apparent from their diagrams below.

\begin{center}

    \begin{tikzpicture}[baseline]

	\draw[thick, dashed] (0,3.5) -- (0,0) -- (1.5, 0) -- (1.5,3.5);

	\draw[fill=black] (0, 3) circle (0.08);

	\node[left] at (0, 3) {$1$};

	\draw[fill=black] (1.5, 2.5) circle (0.08);

	\node[right] at (1.5, 2.5) {$2$};

	\draw[fill=black] (0, 2) circle (0.08);

	\node[left] at (0, 2) {$3$};

	\draw[fill=black] (0, 1.5) circle (0.08);

	\node[left] at (0, 1.5) {$4$};

	\draw[fill=black] (0, 1) circle (0.08);

	\node[left] at (0, 1) {$5$};

	\draw[fill=black] (1.5, 0.5) circle (0.08);

	\node[right] at (1.5, 0.5) {$6$};

	\draw[thick] (0, 2)  -- (0.5, 2) -- (0.5, 1.5) -- (0, 1.5);

        \draw[thick] (0, 3) -- (1, 3) -- (1, 2.5) -- (1.5, 2.5) ;

        \draw[thick] (1,2.5) -- (1, 1) -- (0, 1) ;

        \draw[thick] (1, 1) -- (1, 0.5) -- (1.5, 0.5);

        \node[below] at (0.75,0) {$\pi$};

        \draw[thick, dashed] (4,3.5) -- (4,0) -- (5.5, 0) -- (5.5,3.5);

	\draw[fill=black] (4, 3) circle (0.08);

	\node[left] at (4, 3) {$1$};

	\draw[fill=black] (5.5, 2.5) circle (0.08);

	\node[right] at (5.5, 2.5) {$2$};

	\draw[fill=black] (4, 2) circle (0.08);

	\node[left] at (4, 2) {$3$};

	\draw[fill=black] (4, 1.5) circle (0.08);

	\node[left] at (4, 1.5) {$4$};

	\draw[fill=black] (4, 1) circle (0.08);

	\node[left] at (4, 1) {$5$};

	\draw[fill=black] (5.5, 0.5) circle (0.08);

	\node[right] at (5.5, 0.5) {$6$};

	\draw[thick] (4, 2)  -- (5, 2) -- (5, 2.5) -- (5.5, 2.5);

        \draw[thick] (4,3) -- (4.5, 3) -- (4.5, 0.5) -- (5.5, 0.5);

        \draw[thick] (4, 1.5) -- (4.5, 1.5);

        \draw[thick] (4, 1) -- (4.5,1);

        \node[below] at (4.75,0) {$\sigma$};

	\end{tikzpicture}

\end{center}

In bi-non-crossing diagrams, we call the vertical lines in each block \textit{spines}.

Since we are dealing with a lattice, we can define an incidence algebra and get a M\"obius function.

\begin{definition}
    The \textit{bi-non-crossing M\"obius function} is the function 
    \[ \mu_{BNC} : \bigcup_{n \geq 1} \bigcup_{\upchi:\{1, \ldots, n\} \to \{ \ell, r\}} BNC(\upchi) \times BNC(\upchi) \to \bC\]
    defined so that $\mu_{BNC}(\pi, \sigma) = 0$ unless $\pi$ is a refinement of $\sigma$, and otherwise
    \[ \sum_{\substack{\tau \in BNC(\upchi) \\ \pi \leq \tau \leq \sigma}}\mu_{BNC}(\tau, \sigma) = \sum_{\substack{\tau \in BNC(\upchi) \\ \pi \leq \tau \leq \sigma}} \mu_{BNC} (\pi, \tau) = \begin{cases} 1 & \text{ if } \pi = \sigma \\ 0 & \text{ otherwise} \end{cases} \ \ .\]
\end{definition}

\subsection{Bi-Multiplicative Functions}

Our interest in these partitions comes from their natural presence in computing moments between bi-freely independent pairs of algebras. To see how, first we recall the definition of the operator-valued bi-multiplicative functions from \cite{CNS15}.

\begin{definition} \label{bimultiplicative}
    Let $(\A, E_\A, \varepsilon)$ be a $B$-$B$-non-commutative probability space. Fix $\upchi : \{1, \ldots, n\} \to \{ \ell, r \}$, $\pi \in BNC(\chi)$, and $Z_1, \ldots, Z_n \in \A$. Define
    \[ E_\pi(Z_1, \ldots, Z_n) \in B \]
    recursively as follows. Let $V$ be the block of $\pi$ with the largest minimum element. Then,
    \begin{itemize}
        \item If $\pi = 1_{\chi}$ then
        \[ E_\pi(Z_1, \ldots, Z_n) = E_\A(Z_1 \cdots Z_n).\]
        
        \item If there is some $k \in \{1, \ldots n-1\}$ such that $V = \{ k+1, \ldots, n\}$, then
        \[ E_\pi (Z_1, \ldots, Z_n) = E_{\pi |_{V^c}}(Z_1, \ldots , Z_k L_{E_\A(Z_{k+1} \cdots Z_n)}).\]
        
        \item Otherwise, $\min(V)$ is adjacent to a spine. Let $W$ be the block of $\pi$ corresponding to the spine adjacent to $\min(V)$. Let $k$ be the smallest element of $W$ such that $k > \min(V)$. Then define 
        \[ E_\pi(Z_1, \ldots, Z_n) = \begin{cases} E_{\pi|_{V^c}}((Z_1, \ldots, Z_{k-1}, L_{E_{\pi |_V}((Z_1,\ldots, Z_n)|_V)}Z_k, \ldots, Z_n)|_{V^c})  &\text{ if } \chi(\min(V)) = \ell \\  E_{\pi|_{V^c}}((Z_1, \ldots, Z_{k-1}, R_{E{\pi |_V}((Z_1,\ldots, Z_n)|_V)}Z_k, \ldots, Z_n) |_{V^c})&\text{ if } \chi(\min(V)) = r \end{cases}.\]
    \end{itemize}
\end{definition}

\begin{definition}
    Let $(\A, E_\A, \varepsilon)$ be a $B$-$B$-noncommutative probability space. The \textit{bi-free operator-valued moment function} 
    \[ \E : \bigcup_{n \geq 1} \bigcup_{\upchi:\{1, \ldots, n\} \to \{\ell, r\}} BNC(\upchi) \times \A_{\chi(1)} \times \cdots \times \A_{\chi(n)} \to B\]
    is defined, for each $\upchi : \{1, \ldots, n\} \to \{ \ell, r \}$, $\pi \in BNC(\upchi)$, and $Z_k \in \A_{\chi(k)}$, by
    \[ \E_\pi(Z_1, \ldots, Z_n) = E_\pi(Z_1, \ldots, Z_n).\]

    Similarly, the \textit{bi-free operator-valued cumulant function} 
    \[ \kappa : \bigcup_{n \geq 1} \bigcup_{\upchi:\{1, \ldots, n\} \to \{\ell, r\}} BNC(\upchi) \times \A_{\chi(1)} \times \cdots \times \A_{\chi(n)} \to B\]
    is defined, for each $\upchi : \{1, \ldots, n\} \to \{ \ell, r \}$, $\pi \in BNC(\upchi)$, and $Z_k \in \A_{\chi(k)}$, by
    \[ \kappa_\pi(Z_1, \ldots, Z_n) = \sum_{\substack{\sigma \in BNC(\chi) \\ \sigma \leq \pi}} \E_\sigma(Z_1, \ldots, Z_n) \mu_{BNC}(\sigma,\pi).\]
\end{definition}

The useful property for us is that the cumulants vanish precisely when their arguments come from mixtures of bi-freely independent families. This is captured by the following result (see Theorems 7.1.4 and 8.1.1 in \cite{CNS15}).

\begin{theorem} \label{combinatorialbifree}
    Let $(\A, E_\A, \varepsilon)$ be a $B$-$B$-non-commutative probability space and let $\{(C_k, D_k)\}_{k \in K}$ be a family of pairs of $B$-faces in $\A$. Then $\{(C_k, D_k)\}_{k \in K}$ are bi-free with amalgamation over $B$ if and only if for all $\chi : \{1, \ldots, n\} \to \{\ell, r\}$, $\epsilon : \{1, \ldots, n\} \to K$, and $Z_k$ such that $Z_k \in C_{\epsilon(k)}$ if $\chi(k) = \ell$ and $Z_k \in D_{\epsilon(k)}$ if $\chi(k) = r$, we have the equality 
    \[ E(Z_1 \cdots Z_n) = \sum_{\substack{\pi \in BNC(\chi) \\ \pi \leq \epsilon}} \sum_{\substack{\sigma \in BNC(\chi)\\ \sigma \leq \pi}} \mu_{BNC(\chi)}(\pi, \sigma) \E_\pi(Z_1, \ldots, Z_n).\]
    Equivalently, $\{(C_k, D_k)\}_{k \in K}$ are bi-free with amalgamation over $B$ if and only if 
    \[ \kappa_{1_\chi}(Z_1, \ldots, Z_n) = 0\]
    whenever $\epsilon$ is not constant.
\end{theorem}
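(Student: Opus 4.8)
The plan is to reduce everything to a direct moment computation inside the reduced free product model, since the definition of bi-freeness equates the joint distribution of $\{(C_k, D_k)\}_{k \in K}$ with that of the left and right regular representations acting on the reduced free product $\X$ of the $\X_k$. Thus the real content is a statement about the universal model: I would first prove that for operators $\lambda_{\epsilon(k)}(\ell_{\epsilon(k)}(Z_k))$ (when $\chi(k) = \ell$) and $\rho_{\epsilon(k)}(r_{\epsilon(k)}(Z_k))$ (when $\chi(k) = r$), the expectation $E_{\L(\X)}$ of their product is given by the stated sum over $\pi \in BNC(\chi)$ with $\pi \le \epsilon$. The equivalence of the two characterizations (the moment formula and the vanishing of mixed cumulants) is then a formal consequence of Möbius inversion on the lattice $BNC(\chi)$, using that $\kappa_\pi = \sum_{\sigma \le \pi} \E_\sigma \, \mu_{BNC}(\sigma,\pi)$ is by construction invertible to $\E_\pi = \sum_{\sigma \le \pi} \kappa_\sigma$.

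The heart of the argument is the moment computation, for which I would deploy the LR diagram calculus recalled from \cite{CNS15}. First I would compute the action of a single left operator $\lambda_k(T)$ on a reduced word $\xi_{k_1} \otimes_B \cdots \otimes_B \xi_{k_m} \in \osc{\X}$ via the isomorphism $V_k : \X \to \X_k \otimes_B \X_{\ell}(k)$: it acts on the leftmost tensor leg, either creating a new $\osc{\X}_k$-factor, modifying an existing leading factor, or annihilating it back into $B$; the right operators $\rho_k$ behave symmetrically on the rightmost leg through $W_k$. Applying the whole word $Z_1 \cdots Z_n$ to the vacuum $1_B \oplus 0$ and then applying $p$, I would track which creation is cancelled by which later annihilation. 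The key combinatorial lemma is that a term survives to contribute to $E_{\L(\X)}$ precisely when the induced pattern of pairings and nestings of tensor legs forms a bi-non-crossing partition with respect to $\chi$, and moreover each block is monochromatic in $\epsilon$ (distinct algebras populate distinct free-product legs). This is exactly the recursion encoded by $E_\pi$ in Definition \ref{bimultiplicative}, with the $L_b$ and $R_b$ insertions in that recursion accounting for how a completed inner block feeds its $B$-valued expectation back into the surrounding word.

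Granting the moment formula, the cumulant characterization follows: if the family is bi-free, the computed moments agree with the universal model, so $\E_\pi$ factorizes over the blocks of $\pi$ whenever $\pi \le \epsilon$, and Möbius inversion forces $\kappa_{1_\chi}(Z_1, \ldots, Z_n) = 0$ as soon as $\epsilon$ is non-constant, since a single block spanning two colors cannot be assembled from the monochromatic surviving partitions. Conversely, if all mixed cumulants vanish, then by bi-multiplicativity the full moment-cumulant expansion of $E_\A(Z_1 \cdots Z_n)$ is determined entirely by cumulants of single-algebra words, that is, by the individual joint distributions of the pairs $(C_k, D_k)$. Since the universal free-product model has the same individual distributions and also satisfies the vanishing of mixed cumulants, the two joint distributions coincide, which is precisely the definition of bi-freeness.

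I expect the main obstacle to be the bookkeeping in the LR-diagram computation showing that the surviving terms are exactly the bi-non-crossing partitions with monochromatic blocks and that their contributions reassemble into $\E_\pi$. The subtle points are the amalgamation, where intermediate $B$-values produced by completed blocks must be reinserted as $L_b$ or $R_b$ on the correct side and at the correct position (matching the two cases of Definition \ref{bimultiplicative} according to $\chi(\min(V))$), and the verification that left-creation on one end and right-creation on the other end interact only through the non-crossing condition after transport by $s_\chi$, so that no crossing term survives. Once this alignment between the module-level cancellation pattern and the partition-level recursion is established, the Möbius-inversion step and the converse direction are comparatively routine.
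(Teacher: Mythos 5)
The paper does not prove this theorem --- it is quoted from \cite{CNS15} (Theorems 7.1.4 and 8.1.1) --- so the only meaningful comparison is with that source, and your outline is essentially the proof given there: reduction to the reduced free product model via the definition of bi-freeness, expansion of $Z_1 \cdots Z_n 1_B$ by the LR-diagram calculus, identification of the surviving vacuum terms with bi-non-crossing partitions having $\epsilon$-monochromatic blocks, and M\"obius inversion on $BNC(\chi)$ to pass between the moment formula and the cumulant formulation; your converse direction (vanishing mixed cumulants plus bi-multiplicativity pin down all moments from the one-pair distributions, which the universal model reproduces, cf.\ Theorem \ref{faithfulbimodule}) is likewise the standard argument. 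Two caveats are worth recording. First, the step you defer --- showing that the constants $c_D$ attached to the vacuum LR-diagrams reassemble into the M\"obius-weighted sum of the $\E_\pi$ --- is precisely the hard content of Lemma 7.1.3 and Theorem 7.1.4 of \cite{CNS15}, so as written your text is a correct plan rather than a proof. Second, take care which identity you aim to prove: as printed in the statement, the inner sum runs over $\sigma \leq \pi$ with weight $\mu_{BNC}(\pi, \sigma)$, and since the M\"obius function is supported on pairs whose first entry refines the second, only $\sigma = \pi$ would survive, collapsing the display to $E(Z_1 \cdots Z_n) = \sum_{\pi \leq \epsilon} \E_\pi(Z_1, \ldots, Z_n)$, which is false in general. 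The identity your argument actually yields (and the one in \cite{CNS15}) is
\[ E(Z_1 \cdots Z_n) = \sum_{\substack{\pi \in BNC(\chi) \\ \pi \leq \epsilon}} \Bigl( \sum_{\substack{\sigma \in BNC(\chi) \\ \pi \leq \sigma \leq \epsilon}} \mu_{BNC}(\pi, \sigma) \Bigr) \E_\pi(Z_1, \ldots, Z_n), \]
equivalently $E(Z_1 \cdots Z_n) = \sum_{\sigma \in BNC(\chi), \, \sigma \leq \epsilon} \kappa_\sigma(Z_1, \ldots, Z_n)$; a completed write-up should target this corrected formula.
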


\subsection{LR diagrams}

More generally, we want to keep track of the vector components of the action of words from independent algebras on reduced free product space. So we turn to another, but similar, set of diagrams to help us describe the pieces. 

Throughout this section, in addition to a function $\chi : \{1, \ldots, n\} \to \{\ell, r\}$, let $\epsilon : \{1, \ldots, n\} \to K$ for some fixed index set $K$.

\begin{definition}
    The set $LR(\chi,\epsilon)$ of \textit{shaded $LR$ diagrams} is defined recursively. When $n = 0$, $LR(\chi,\epsilon)$ contains only an empty diagram. If $n > 0$, let $\ol{\chi}(k) = \chi(k-1)$ and $\ol{\epsilon}(k) = \epsilon(k-1)$ for $k \in \{2, \ldots, n\}$. Then, each $D \in LR(\ol{\chi},\ol{\epsilon})$ corresponds to two unique elements of $LR(\chi,\epsilon)$ by the following processes:
    \begin{itemize}
        \item First, add to the top of $D$ a node on the side corresponding to $\chi(1)$, shaded by $\epsilon(1)$.
        \item If a string of shade $\epsilon(1)$ extends from the top of $D$, connect it to the added node. 
        \item Then, choose to either extend a string from the added node to the top of the new diagram or not, and extend any other strings from $D$ to the top of the new diagram.
    \end{itemize}
    We denote by $LR_k(\chi,\epsilon)$, the subset of $LR(\chi(\epsilon))$ consisting of diagrams with exactly $k$ strings that reach the top gap.

    We refer to the vertical portions of strings in the diagrams as \textit{spines} and horizontal segments connecting nodes to spines as \textit{ribs}.
\end{definition}

\begin{example}
As an example, consider $\chi = (\ell, r, \ell)$ and $\epsilon = (', ', '')$, Then $\overline{\chi} = (r, \ell)$ and $\overline{\epsilon} = (', '')$. Consider the diagram $D \in LR(\ol{\chi},\ol{\epsilon})$ formed by extending a spine from the node $1$ to the top gap, and leaving the node $2$ isolated. Then the two diagrams we get from extending $D$ are $D_1, D_2 \in LR(\chi,\epsilon)$, pictured below.

\begin{center}

    \begin{tikzpicture}[baseline]

	\draw[thick, dashed] (0,1.5) -- (0,0) -- (1.5, 0) -- (1.5,1.5);

	\draw[orange, fill=orange] (1.5, 1) circle (0.05);

	\node[right] at (1.5, 1) {$1$};

	\draw[blue, fill=blue] (0, 0.5) circle (0.05);

	\node[left] at (0, 0.5) {$2$};

        \draw[orange, thick] (1.5, 1) -- (0.75, 1) -- (0.75, 1.5);

        \node[below] at (0.75,0) {$D$};

        \draw[->] (2.5, 1) -- (3.5,1);

        \draw[thick, dashed] (4.5,2) -- (4.5,0) -- (6, 0) -- (6,2);

        \draw[orange, fill=orange] (4.5, 1.5) circle (0.05);

	\node[left] at (4.5, 1.5) {$1$};

        \draw[orange, fill=orange] (6, 1) circle (0.05);

	\node[right] at (6, 1) {$2$};

	\draw[blue, fill=blue] (4.5, 0.5) circle (0.05);

	\node[left] at (4.5, 0.5) {$3$};

        \draw[orange, thick] (6, 1) -- (5.25, 1) -- (5.25, 1.5) -- (4.5, 1.5);

        \node[below] at (5.25,0) {$D_1$};

        \draw[thick, dashed] (7,2) -- (7,0) -- (8.5, 0) -- (8.5,2);

        \draw[orange, fill=orange] (7, 1.5) circle (0.05);

	\node[left] at (7, 1.5) {$1$};

        \draw[orange, fill=orange] (8.5, 1) circle (0.05);

	\node[right] at (8.5, 1) {$2$};

	\draw[blue, fill=blue] (7, 0.5) circle (0.05);

	\node[left] at (7, 0.5) {$3$};

        \draw[orange, thick] (8.5, 1) -- (7.75, 1) -- (7.75, 1.5) -- (7, 1.5);

        \draw[orange, thick] (7.75, 1.5) -- (7.75, 2);

        \node[below] at (7.75,0) {$D_2$};

	\end{tikzpicture}

\end{center}

Pictured below is the complete collection $LR(\chi,\epsilon)$ with $\chi$ and $\epsilon$ as above.

\begin{center}

    \begin{tikzpicture}[baseline]

        \draw[thick, dashed] (0,2) -- (0,0) -- (1.5, 0) -- (1.5,2);

        \draw[orange, fill=orange] (0, 1.5) circle (0.05);

	\node[left] at (0, 1.5) {$1$};

        \draw[orange, fill=orange] (1.5, 1) circle (0.05);

	\node[right] at (1.5, 1) {$2$};

	\draw[blue, fill=blue] (0, 0.5) circle (0.05);

	\node[left] at (0, 0.5) {$3$};

        \node[below] at (0.75,0) {$E_1$};

        \draw[thick, dashed] (3.5,2) -- (3.5,0) -- (5, 0) -- (5,2);

        \draw[orange, fill=orange] (3.5, 1.5) circle (0.05);

	\node[left] at (3.5, 1.5) {$1$};

        \draw[orange, fill=orange] (5, 1) circle (0.05);

	\node[right] at (5, 1) {$2$};

	\draw[blue, fill=blue] (3.5, 0.5) circle (0.05);

	\node[left] at (3.5, 0.5) {$3$};

        \draw[orange, thick] (3.5,1.5) -- (4.25, 1.5) -- (4.25, 2);

        \node[below] at (4.25,0) {$E_2$};

        \draw[thick, dashed] (7,2) -- (7,0) -- (8.5, 0) -- (8.5,2);

        \draw[orange, fill=orange] (7, 1.5) circle (0.05);

	\node[left] at (7, 1.5) {$1$};

        \draw[orange, fill=orange] (8.5, 1) circle (0.05);

	\node[right] at (8.5, 1) {$2$};

	\draw[blue, fill=blue] (7, 0.5) circle (0.05);

	\node[left] at (7, 0.5) {$3$};

        \draw[orange, thick] (8.5, 1) -- (7.75, 1) -- (7.75, 1.5) -- (7, 1.5);

        \node[below] at (7.75,0) {$E_3$};

        \draw[thick, dashed] (10.5,2) -- (10.5,0) -- (12, 0) -- (12,2);

        \draw[orange, fill=orange] (10.5, 1.5) circle (0.05);

	\node[left] at (10.5, 1.5) {$1$};

        \draw[orange, fill=orange] (12, 1) circle (0.05);

	\node[right] at (12, 1) {$2$};

	\draw[blue, fill=blue] (10.5, 0.5) circle (0.05);

	\node[left] at (10.5, 0.5) {$3$};

        \draw[orange, thick] (12, 1) -- (11.25, 1) -- (11.25, 1.5) -- (10.5, 1.5);

        \draw[orange, thick] (11.25, 1.5) -- (11.25, 2);

        \node[below] at (11.25,0) {$E_4$};

	\end{tikzpicture}

\end{center}

\begin{center}

    \begin{tikzpicture}[baseline]

        \draw[thick, dashed] (0,2) -- (0,0) -- (1.5, 0) -- (1.5,2);

        \draw[orange, fill=orange] (0, 1.5) circle (0.05);

	\node[left] at (0, 1.5) {$1$};

        \draw[orange, fill=orange] (1.5, 1) circle (0.05);

	\node[right] at (1.5, 1) {$2$};

	\draw[blue, fill=blue] (0, 0.5) circle (0.05);

	\node[left] at (0, 0.5) {$3$};

        \draw[blue, thick] (0, 0.5) -- (0.75, 0.5) -- (0.75, 2);

        \node[below] at (0.75,0) {$E_5$};

        \draw[thick, dashed] (3.5,2) -- (3.5,0) -- (5, 0) -- (5,2);

        \draw[orange, fill=orange] (3.5, 1.5) circle (0.05);

	\node[left] at (3.5, 1.5) {$1$};

        \draw[orange, fill=orange] (5, 1) circle (0.05);

	\node[right] at (5, 1) {$2$};

	\draw[blue, fill=blue] (3.5, 0.5) circle (0.05);

	\node[left] at (3.5, 0.5) {$3$};

        \draw[orange, thick] (3.5,1.5) -- (3.88, 1.5) -- (3.88, 2);

        \draw[blue, thick] (3.5,0.5) -- (4.25, 0.5) -- (4.25, 2);

        \node[below] at (4.25,0) {$E_6$};

        \draw[thick, dashed] (7,2) -- (7,0) -- (8.5, 0) -- (8.5,2);

        \draw[orange, fill=orange] (7, 1.5) circle (0.05);

	\node[left] at (7, 1.5) {$1$};

        \draw[orange, fill=orange] (8.5, 1) circle (0.05);

	\node[right] at (8.5, 1) {$2$};

	\draw[blue, fill=blue] (7, 0.5) circle (0.05);

	\node[left] at (7, 0.5) {$3$};

        \draw[orange, thick] (8.5, 1) -- (8.12, 1) -- (8.12, 2);

        \draw[blue, thick] (7,0.5) -- (7.75,0.5) -- (7.75, 2);

        \node[below] at (7.75,0) {$E_7$};

        \draw[thick, dashed] (10.5,2) -- (10.5,0) -- (12, 0) -- (12,2);

        \draw[orange, fill=orange] (10.5, 1.5) circle (0.05);

	\node[left] at (10.5, 1.5) {$1$};

        \draw[orange, fill=orange] (12, 1) circle (0.05);

	\node[right] at (12, 1) {$2$};

	\draw[blue, fill=blue] (10.5, 0.5) circle (0.05);

	\node[left] at (10.5, 0.5) {$3$};

        \draw[blue, thick] (10.5, 0.5) -- ( 11.25, 0.5) -- (11.25, 2);

        \draw[orange, thick] (10.5, 1.5) -- (10.88, 1.5) -- (10.88, 2);

        \draw[orange, thick] (12, 1) -- (11.62, 1) -- (11.62, 2);

        \node[below] at (11.25,0) {$E_8$};

	\end{tikzpicture}

\end{center}

\end{example}

\begin{remark}
    Note that any diagram in $LR_0(\chi, \epsilon)$ corresponds to a partition $\pi \in BNC(\chi)$ by defining the blocks in $\pi$ to be those collections of numbers connected by strings in the diagram.

    In the above example $\{E_1, E_3\}$ are the two diagrams in $LR_0(\chi,\epsilon)$ corresponding to the bi-non-crossing partitions $\pi_1 = \{\{1\},\{2\},\{3\}\}$ and $\pi_2 = \{\{1,2\},\{3\}\}$.
\end{remark}

\subsection{An LR-diagram calculus}

Dealing with freeness comes with a certain chaos in performing moment calculations with longer strings of operators. The number of \textit{pieces} that can split off during a process like this can grow very quickly. The $LR$-diagrams provide a useful calculus for keeping track of such pieces.

\begin{definition}
    Let $D_1, D_2 \in LR(\chi,\epsilon)$, we say that $D_1$ is a \textit{lateral refinement} of $D_2$, and write $D_1 \leq_{\text{lat}} D_2$ if $D_2$ can be made from $D_1$ by a sequence of cuts to $D_1$'s spines between ribs (not cutting along the top-gap). 
    
    Let $LR^{\text{lat}}(\chi, \epsilon)$ denote the closure of $LR(\chi,\epsilon)$ under lateral refinement.
\end{definition}

\begin{definition} \label{LRpiecesdef}
    Let $\{(\X_k, \osc{\X}_k, p_k)\}_{k \in K}$ be $B$-$B$-bimodules with specified projections. Let $(\X, \osc{\X}, p)$ be their reduced free product with amalgamation over $B$ and for each $k \in K$ let $\lambda_k$ and $\rho_k$ be respectively the left and right regular representations of $\L(\X_k)$ on $\X$. For each $i \in \{1, \ldots, n\}$, let $T_i \in \L_{\chi(i)}(\X_{\epsilon(i)})$. Define $\mu_i(T_i) = \lambda_{(\epsilon(i))}(T_i)$ if $\chi(i) = \ell$ and $\mu_i(T_i) = \rho_{\epsilon(i)}(T_i)$ if $\chi(i) = r$. For each $D \in LR^{\text{lat}}(\chi, \epsilon)$ we define $E_D(\mu_1(T_1), \ldots, \mu_n(T_n))$ recursively as follows. Apply the same recursive process as in Definition \ref{bimultiplicative} until every block of $D$ has a spine reaching the top. If every block of $D$ has a spine reaching the top gap, enumerate them from left to right according to their spines as $V_1, \ldots, V_m$ with $V_j = \{i_{j,1} < \cdots < i_{j,q_j}\}$, and set 
    \[ E_D(\mu_1(T_1), \ldots, \mu_n(t_n)) = [(1-p_{\epsilon(i_{1,1})})T_{i_{1,1}} \cdots T_{i_{1,q_1}}1_B] \otimes\cdots \otimes [(1-p_{\epsilon(i_{m,1})})T_{i_{m,1}} \cdots T_{i_{m,q_m}}1_B].\]
\end{definition}

\begin{remark} \label{wherearetheLRpieces}
    Given the notations as above, a simple but useful remark is that we have 
    \[ E_D(\mu_1(T_1), \ldots, \mu_n(T_n)) \in \osc{\X}_{\epsilon(i_{1,1})} \otimes \cdots \otimes \osc{\X}_{\epsilon(i_{m,1})}.\]
\end{remark}

We have the following result.

\begin{theorem}[\cite{CNS15}, Lemma 7.1.3] \label{LRcountingtheorem}
    With notation as in Definition \ref{LRpiecesdef}, 
    \[ \mu_1(T_1) \cdots \mu_n(T_n)1_B = \sum_{D \in LR^{\text{lat}}(\chi, \epsilon)} c_D E_D(\mu_1(T_1),\ldots, \mu_n(T_n)),\]
    where $c_D$ are constants depending only on the diagram $D$.
\end{theorem}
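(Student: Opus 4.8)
The plan is to induct on the number of operators $n$, exploiting the fact that the recursive construction of $LR(\chi,\epsilon)$ --- which builds each diagram by adjoining a node for index $1$ to the top of a diagram in $LR(\ol{\chi},\ol{\epsilon})$ --- mirrors exactly the way the outermost operator $\mu_1(T_1)$ acts on the vector $\mu_2(T_2)\cdots\mu_n(T_n)1_B$. The base case $n=0$ is immediate: the empty product is $1_B$, while $LR(\chi,\epsilon)$ consists of the single empty diagram with $E_{\emptyset} = 1_B$.

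For the inductive step I would first apply the inductive hypothesis to the $n-1$ operators $\mu_2(T_2),\ldots,\mu_n(T_n)$, rewriting $\mu_2(T_2)\cdots\mu_n(T_n)1_B = \sum_{D' \in LR^{\text{lat}}(\ol{\chi},\ol{\epsilon})} c_{D'} E_{D'}$, where by Remark \ref{wherearetheLRpieces} each summand $E_{D'}$ is a reduced word $\eta = \eta_1 \otimes \cdots \otimes \eta_m$ lying in $\osc{\X}_{k_1}\otimes \cdots \otimes \osc{\X}_{k_m}$, with $k_1, \ldots, k_m$ the shades of the top-gap spines of $D'$. By linearity it then suffices to compute $\mu_1(T_1)\eta$ for such a reduced word and to recognize the result as a combination of pieces $E_D$.

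The heart of the argument is this local computation. Suppose $\chi(1) = \ell$, so that $\mu_1(T_1) = \lambda_{\epsilon(1)}(T_1) = V_{\epsilon(1)}^{-1}(T_1 \otimes \text{Id})V_{\epsilon(1)}$. One splits into two cases according to whether the leading shade $k_1$ equals $\epsilon(1)$. If $k_1 \neq \epsilon(1)$ then $\eta \in \X_\ell(\epsilon(1))$ and $V_{\epsilon(1)}\eta = 1_B \otimes \eta$, so one evaluates $T_1 1_B$; if $k_1 = \epsilon(1)$ then $\eta = \eta_1 \otimes \eta'$ with $\eta' \in \X_\ell(\epsilon(1))$ and one evaluates $T_1\eta_1$ instead. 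In either case I would decompose the result through $p_{\epsilon(1)}$, writing $T_1 \xi = p_{\epsilon(1)}(T_1\xi) \oplus (1-p_{\epsilon(1)})(T_1\xi)$ with the first summand in $B$ and the second in $\osc{\X}_{\epsilon(1)}$. The $(1-p_{\epsilon(1)})$ part produces a new $\osc{\X}_{\epsilon(1)}$ letter --- either prepended as a fresh spine or fused onto an existing spine of the same shade --- which is precisely the two choices (connect to an existing string, then extend or not) in the recursive definition of $LR(\chi,\epsilon)$; the $p_{\epsilon(1)}$ part feeds a scalar from $B$ back into the word by left multiplication, which is exactly the $L_b$-contraction step of Definition \ref{bimultiplicative} invoked in Definition \ref{LRpiecesdef} and which lands among the lateral refinements $LR^{\text{lat}}(\chi,\epsilon)$. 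The case $\chi(1) = r$ is handled symmetrically using $\rho_{\epsilon(1)}$, $W_{\epsilon(1)}$, and right multiplication, with the word growing from its right end.

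The main obstacle I anticipate is the bookkeeping of the constants $c_D$ together with the verification that the lateral-refinement closure is stable under the $B$-absorption steps. A single application of $\mu_1(T_1)$ can both create genuinely new diagrams (accounting for the two children of $D'$ under the recursive construction) and, via the $p_{\epsilon(1)}$ projection, collapse a spine and reattach a scalar, thereby passing to a lateral refinement rather than to an honest element of $LR(\chi,\epsilon)$; showing that these collapsed terms are already indexed by $LR^{\text{lat}}(\chi,\epsilon)$ and that their coefficients depend only on the combinatorial diagram --- and not on the chosen $T_i$ --- is the delicate point. The amalgamation over $B$ is a further subtlety, since one must track how the bimodule structure moves scalars across $\otimes_B$ factors and confirm compatibility with those identifications; carefully invoking the reduction rules of Definition \ref{bimultiplicative} to normalize each term is what ultimately makes the coefficients $c_D$ well-defined and diagram-dependent only.
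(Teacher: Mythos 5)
Your proposal is correct and follows essentially the same route as the paper's source for this result: the paper does not reprove the statement but cites \cite{CNS15}, Lemma 7.1.3, whose argument is exactly the induction you describe — peel off the outermost operator, apply the inductive hypothesis to $\mu_2(T_2)\cdots\mu_n(T_n)1_B$, and split the action of $\mu_1(T_1)$ on each reduced word through $p_{\epsilon(1)}$ versus $1-p_{\epsilon(1)}$, with the $B$-valued part generating the lateral refinements and the complementary part creating or extending spines. This is also precisely the recursive ``adding operators onto diagram-indexed vectors'' mechanism that the paper itself reuses and extends in Lemma \ref{extendedLRcountingtheorem}, so your reconstruction, including the flagged subtlety about coefficients depending only on the diagram, is faithful to the intended proof.
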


In Lemma 7.1.3 of \cite{CNS15}, the constants $c_D$ are made explicit, but we have no need for such precision. Keeping them as abstract constants will suffice and help to keep the notational noise to a minimum.

In fact, the proof of the above result can be extended more generally, as it details a recursive way to add on to the diagram-indexed vectors, $E_D$,  with more operators. We start with some notation to help us keep track.

\begin{definition}
    Let $i \leq n \in \bN$. Let $\tilde{\chi} = \chi|_{\{i, \ldots, n\}}$, $\tilde{\epsilon} = \epsilon|_{\{i, \ldots, n\}}$, and $D_0 \in LR^{\text{lat}}(\tilde{\chi}, \tilde{\epsilon})$. Say a diagram $D \in LR^{\text{lat}}(\chi, \epsilon)$ is a \textit{$\chi$-extension} of $D_0$ if there is some diagram $D' \in LR^{\text{lat}}(\chi,\epsilon)$ with $D' |_{\{i,\ldots, n\}} = D_0$ and $D$ can be laterally refined from $D'$ by only making spine-cuts (between ribs) above $i$ in the diagram $D'$.

    If $S \subseteq LR^{\text{lat}}(\tilde{\chi},\tilde{\epsilon})$, denote by $S^\chi$, the set of diagrams in $LR^{\text{lat}}(\chi,\epsilon)$ which are $\chi$-extensions of diagrams in $S$.
\end{definition}

\begin{remark} \label{extofext}
    It is immediate from the definition above that if $i_1 \leq i_2$, $\chi_0 = \chi |_{\{i_1, \ldots, n\}}$ and $\tilde{\chi} = \chi_0 |_{\{i_2, \ldots, n\}}$, then 
    \[ (S^{\chi_0})^{\chi} = S^\chi\]
    for any $S \subseteq LR^{\text{lat}}(\tilde{\chi}, \tilde{\epsilon})$
\end{remark}

The purpose of this definition is that it contains the process described in the proof of Theorem \ref{LRcountingtheorem} for applying new (left and right) operators to diagram-indexed vectors. With this notation in hand, we extend Theorem \ref{LRcountingtheorem} to the following result.

\begin{lemma} \label{extendedLRcountingtheorem}
    Let $i \leq n$ and other notations be as above. Then for any $S \subseteq LR^{\text{lat}}(\tilde{\chi}, \tilde{\epsilon})$ and constants $c_{D,S}$ for each $D \in S$, there are constants $c_{D,S}'$ such that
    \[ \mu_1(T_1) \cdots \mu_{k-1}(T_{i-1}) \left( \sum_{D \in S} c_{D,S} E_D(\mu_k(T_i), \ldots, \mu_n(T_n))\right) = \sum_{D \in S^{\chi}}c_{D,S}'E_D(\mu_1(T_1), \ldots, \mu_n(T_n))\]
\end{lemma}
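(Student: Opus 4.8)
The plan is to induct on the number $i-1$ of regular-representation operators $\mu_1(T_1), \ldots, \mu_{i-1}(T_{i-1})$ applied to the block $\sum_{D \in S} c_{D,S} E_D$, peeling off one operator at a time starting with the innermost, $\mu_{i-1}(T_{i-1})$. The single-operator step is exactly the recursive procedure already carried out inside the proof of Theorem \ref{LRcountingtheorem} (as the paragraph preceding the lemma advertises), and the composition identity of Remark \ref{extofext} lets the nested extensions collapse into the single set $S^\chi$. For the base case $i=1$ no operator is applied, $\tilde{\chi} = \chi$, $\tilde{\epsilon} = \epsilon$, and $S^\chi = S$ since there are no positions above $1$ at which a spine can be cut; taking $c_{D,S}' = c_{D,S}$ gives the claim.

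The crux is the single-operator step: for a single diagram $D_0 \in LR^{\text{lat}}(\chi|_{\{i,\ldots,n\}}, \epsilon|_{\{i,\ldots,n\}})$, I claim
\[ \mu_{i-1}(T_{i-1})\, E_{D_0}(\mu_i(T_i), \ldots, \mu_n(T_n)) = \sum_{D \in \{D_0\}^{\chi_{i-1}}} c_{D}''\, E_{D}(\mu_{i-1}(T_{i-1}), \ldots, \mu_n(T_n)), \]
where $\chi_{i-1} = \chi|_{\{i-1,\ldots,n\}}$. By Remark \ref{wherearetheLRpieces}, $E_{D_0}$ is a tensor word $\eta_1 \otimes \cdots \otimes \eta_m$ with $\eta_j \in \osc{\X}_{\epsilon(i_{j,1})}$ indexed by the spines of $D_0$ reaching the top gap. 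Writing $k = \epsilon(i-1)$, I expand $\lambda_k(T_{i-1}) = V_k^{-1}(T_{i-1} \otimes \mathrm{Id})V_k$ (and dually $\rho_k$ via $W_k$ when $\chi(i-1) = r$), and split the output of $T_{i-1}$ on the leftmost factor into its $p_k$-part, lying in $B$, and its $(1-p_k)$-part, lying in $\osc{\X}_k$. These two summands are precisely the two diagrams produced by the recursive step defining $LR(\chi_{i-1}, \cdot)$, according to whether the new top node at position $i-1$ merges with the leftmost reaching spine of $D_0$ (when its shade matches $\epsilon(i-1)$) or opens a fresh spine, and whether or not a string is extended to the top gap. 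Extending linearly over $D_0 \in S$ produces the combination over $S^{\chi_{i-1}}$.

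For the inductive step, assume the statement for index $i-1$. Applying the single-operator step to $\mu_{i-1}(T_{i-1}) \sum_{D \in S} c_{D,S} E_D$ yields $\sum_{D' \in S^{\chi_{i-1}}} c_{D'}'' E_{D'}$ with $S^{\chi_{i-1}} \subseteq LR^{\text{lat}}(\chi_{i-1}, \epsilon|_{\{i-1,\ldots,n\}})$. The induction hypothesis, applied to the set $S^{\chi_{i-1}}$ and the prefix operators $\mu_1(T_1), \ldots, \mu_{i-2}(T_{i-2})$, furnishes constants with
\[ \mu_1(T_1) \cdots \mu_{i-2}(T_{i-2}) \sum_{D' \in S^{\chi_{i-1}}} c_{D'}'' E_{D'} = \sum_{D \in (S^{\chi_{i-1}})^{\chi}} c_{D,S}'\, E_{D}. \]
Finally, Remark \ref{extofext} with $i_1 = i-1$ and $i_2 = i$ gives $(S^{\chi_{i-1}})^\chi = S^\chi$, which completes the induction.

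The main obstacle is the single-operator step itself: verifying the case analysis on whether $\epsilon(i-1)$ matches the shade of the leftmost (resp. rightmost) reaching spine of $D_0$, and confirming that the $p_k$- and $(1-p_k)$-pieces correspond exactly to the $\chi_{i-1}$-extensions of $D_0$ (including the lateral-refinement closure), with no diagram outside $\{D_0\}^{\chi_{i-1}}$ arising. This amounts to a careful reading of the recursion in Definition \ref{LRpiecesdef} together with the proof of \cite{CNS15}, Lemma 7.1.3, and the bookkeeping of which spines reach the top gap is where essentially all the work lies; the inductive assembly built around it is routine.
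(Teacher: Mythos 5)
Your proposal is correct and is essentially the paper's own argument, unpacked: the paper's proof simply observes that the inductive diagram-extension process from \cite{CNS15}, Lemma 7.1.3 (i.e.\ your single-operator step) applies verbatim starting from the smaller collection $S$, with Remark \ref{extofext} implicitly handling the composition of extensions and the constants still depending only on $D$ and $S$. Your explicit induction on the number of prepended operators, with the base case $i=1$ and the collapse $(S^{\chi_{i-1}})^{\chi} = S^{\chi}$, is exactly this reasoning made formal.
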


\begin{proof}
    The proof is the same as for \cite[Lemma 7.1.3]{CNS15}, as the process inductively added on to existing diagrams. The only part that differs now is that we may end up with different constants scaling the vectors $E_D$, since we are starting with a potentially smaller collection of diagrams than in Lemma 7.1.3. of \cite{CNS15}. In the same way however, these constants depend only on the diagram $D$ and the collection of diagrams $S$ that we use.
\end{proof}

\begin{notation} \label{boolprojannihilatesLR}
    We want to combine this result with our Boolean projections. To this end, for each $k \in K$, we define the set 
    \[ LR^{\text{lat}}(\chi, \epsilon, k) := \{ D \in LR^{\text{lat}}(\chi, \epsilon) \mid \text{if a spine of $D$ reaches the top gap then it has colour $k$} \}.\]
    By Remark \ref{wherearetheLRpieces}, this is precisely the collection of $LR$-diagrams which do not get annihilated by the Boolean projection $P_k$.

    It is worth noting that since lateral refinements may only cut spines between ribs (not at the top gap), it follows that $D \in LR^{\text{lat}}(\chi, \epsilon, k)$ if $D$ has either no spines reaching the top gap, or exactly one spine reaching the top gap of colour $\epsilon(k)$.

    We denote the complement of $LR^{\text{lat}}(\chi,\epsilon,k)$ in $LR^{\text{lat}}(\chi,\epsilon)$ simply by $LR^{\text{lat}}(\chi,\epsilon,k)^c$. This is the collection of diagrams with at least one spine reaching the top gap with a different colour than $k$.
\end{notation}

Combining Theorem \ref{LRcountingtheorem} with Remark \ref{wherearetheLRpieces} yields the following.

\begin{lemma} \label{boolprojLR}
    With notations as in Definition \ref{LRpiecesdef}, let $k \in K$ and let $P_k$ be a Boolean projection as in Definition \ref{boolprojdef}.
    Then we have 
    \[ P_k \sum_{D \in S} c_{D,S} E_D(\mu_1(T_1), \ldots, \mu_n(T_n)) = \sum_{D \in LR^{\text{lat}}(\chi, \epsilon,k) \cap S} c_{D,S} E_D(\mu_1(T_1),\ldots, \mu_n(T_n)),\]
    and consequently
    \[ (1-P_k)\sum_{D \in S} c_{D,S} E_D(\mu_1(T_1), \ldots, \mu_n(T_n)) = \sum_{D \in LR^{\text{lat}}(\chi, \epsilon,k)^c \cap S} c_{D,S} E_D(\mu_1(T_1),\ldots, \mu_n(T_n)).\]
\end{lemma}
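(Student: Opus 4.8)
The plan is to combine the structural counting from Theorem \ref{LRcountingtheorem} with the vector-location information of Remark \ref{wherearetheLRpieces} and the explicit action of the Boolean projection $P_k$ from Definition \ref{boolprojdef}. The statement is essentially a decomposition of an arbitrary $S$-indexed sum of diagram vectors according to whether each diagram survives the projection $P_k$, so the heart of the matter is to understand $P_k E_D(\mu_1(T_1), \ldots, \mu_n(T_n))$ for a single diagram $D \in S$ and to observe that this depends only on whether $D \in LR^{\text{lat}}(\chi, \epsilon, k)$.

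First I would fix a single diagram $D \in S$ and examine the vector $E_D(\mu_1(T_1), \ldots, \mu_n(T_n))$. By Remark \ref{wherearetheLRpieces}, once the recursive process terminates and every remaining block has a spine reaching the top gap, this vector lies in $\osc{\X}_{\epsilon(i_{1,1})} \otimes \cdots \otimes \osc{\X}_{\epsilon(i_{m,1})}$, where the tensor factors are indexed by the colours of the spines reaching the top gap, read left to right. The key case distinction is on the number and colours of these surviving spines. If $D$ has no spines reaching the top gap, then (after the recursion collapses everything into $B$) the resulting vector lies in $B \subseteq B \oplus \osc{\X}_k$, so $P_k$ fixes it. If $D$ has exactly one spine reaching the top gap of colour $\epsilon(k) = k$, the vector lies in $\osc{\X}_k$, which is again inside $B \oplus \osc{\X}_k$, so $P_k$ again acts as the identity. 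In every other case, $D$ has at least one top-gap spine of colour different from $k$, so the vector lies in a tensor-product component of $\osc{\X}$ orthogonal to $B \oplus \osc{\X}_k$; since $P_k$ is the natural projection onto $B \oplus \osc{\X}_k$, it annihilates this vector. By the characterization recorded in Notation \ref{boolprojannihilatesLR}, the first two (surviving) cases are exactly the condition $D \in LR^{\text{lat}}(\chi, \epsilon, k)$, and the annihilated case is exactly $D \in LR^{\text{lat}}(\chi, \epsilon, k)^c$.

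Having established $P_k E_D = E_D$ for $D \in LR^{\text{lat}}(\chi, \epsilon, k)$ and $P_k E_D = 0$ otherwise, I would then apply $P_k$ to the full sum and use its $\bC$-linearity to distribute it across the terms:
\[ P_k \sum_{D \in S} c_{D,S} E_D = \sum_{D \in S} c_{D,S}\, P_k E_D = \sum_{D \in LR^{\text{lat}}(\chi, \epsilon, k) \cap S} c_{D,S} E_D. \]
The complementary identity follows immediately by writing $1 - P_k$ in place of $P_k$ and using that $(1-P_k)E_D = 0$ precisely when $D \in LR^{\text{lat}}(\chi, \epsilon, k)$ and equals $E_D$ otherwise, which is the same dichotomy with the roles of the two sets reversed; alternatively one simply subtracts the first identity from the undisturbed sum $\sum_{D \in S} c_{D,S} E_D$.

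I expect the main obstacle to be the single-diagram case analysis, specifically making rigorous the claim that the surviving vector lies in the tensor component $\osc{\X}_{\epsilon(i_{1,1})} \otimes \cdots \otimes \osc{\X}_{\epsilon(i_{m,1})}$ and that this component meets $B \oplus \osc{\X}_k$ nontrivially only in the two surviving cases. This requires care because the recursive definition in Definition \ref{LRpiecesdef} folds collapsed blocks into the operators via $L$ and $R$ before the final tensor is formed, so one must verify that the colours of the genuinely surviving spines (and not the colours absorbed during recursion) are what determine membership in $B \oplus \osc{\X}_k$. Once Remark \ref{wherearetheLRpieces} is invoked correctly, however, this reduces to the direct-sum decomposition of $\osc{\X}$ into reduced tensor words and the fact that $B \oplus \osc{\X}_k$ is precisely the span of the empty word together with the length-one words in colour $k$; everything else is bookkeeping already handled by Notation \ref{boolprojannihilatesLR}.
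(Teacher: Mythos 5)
Your proposal is correct and is essentially the argument the paper intends: the paper offers no written proof beyond ``Combining Theorem \ref{LRcountingtheorem} with Remark \ref{wherearetheLRpieces} yields the following,'' and your per-diagram dichotomy (locate $E_D$ via Remark \ref{wherearetheLRpieces}, note that $P_k$ fixes vectors in $B$ or $\osc{\X}_k$ and annihilates every other tensor component, then distribute $P_k$ across the sum by linearity) is exactly that combination made explicit. The only nitpick is the phrase ``colour $\epsilon(k)=k$'' (the colour in question is simply $k \in K$, a slip the paper's Notation \ref{boolprojannihilatesLR} also makes), and ``orthogonal'' should be read as ``complementary direct summand''; neither affects the argument.
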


We can combine our observations to get the following technical lemma which serves as an extension of the LR-diagram calculus, but now including the Boolean projections.

\begin{lemma} \label{keyprojLRlemma}
    With notations as above, for $m \geq 0$, let $1 \leq i_1 < i_2 < \cdots < i_m \leq n$ and define 
    \[ \mu_j'(T_j) := \begin{cases} \mu_j(T_j) & \text{ if } j \notin \{i_1, \ldots, i_m\}\\ P_{\epsilon(j)}\mu_j(T_j) & \text{ otherwise}\end{cases} .\]
    Then there is some $S \subseteq LR^{\text{lat}}(\chi,\epsilon)$ and constants $c_{D,S}$ such that
    \[ \mu_1(T_1) \cdots \mu_n(T_n)1_B = \mu_1'(T_1) \cdots \mu_n'(T_n)1_B \\
    + \sum_{D \in S} c_{D,S} E_D(\mu_1(T_1),\ldots, \mu_n(T_n)), \]
    where $S \subseteq \bigcup_{j = 1}^m S_{i_j}^\chi$, and 
    $S_{i_j} = LR^{\text{lat}}(\chi|_{\{i_j, \ldots, n\}},\epsilon|_{\{i_j, \ldots, n\}}, i_j)^c$.
\end{lemma}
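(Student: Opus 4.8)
The plan is to insert the Boolean projections one at a time, from left to right, and telescope. For $0 \le s \le m$ let $\Phi^{(s)}$ denote the product $\mu_1^{(s)}(T_1)\cdots\mu_n^{(s)}(T_n)1_B$ in which $\mu_j^{(s)} = P_{\epsilon(j)}\mu_j(T_j)$ for $j \in \{i_1,\ldots,i_s\}$ and $\mu_j^{(s)}(T_j) = \mu_j(T_j)$ otherwise, so that $\Phi^{(0)} = \mu_1(T_1)\cdots\mu_n(T_n)1_B$ and $\Phi^{(m)} = \mu_1'(T_1)\cdots\mu_n'(T_n)1_B$. Then
\[ \mu_1(T_1)\cdots\mu_n(T_n)1_B - \mu_1'(T_1)\cdots\mu_n'(T_n)1_B = \sum_{s=1}^m \left(\Phi^{(s-1)} - \Phi^{(s)}\right), \]
and since $\Phi^{(s-1)}$ and $\Phi^{(s)}$ agree except at position $i_s$, where they carry $\mu_{i_s}(T_{i_s})$ and $P_{\epsilon(i_s)}\mu_{i_s}(T_{i_s})$ respectively, each summand factors as
\[ \Phi^{(s-1)} - \Phi^{(s)} = \mu_1^{(s)}(T_1)\cdots\mu_{i_s-1}^{(s)}(T_{i_s-1})\,(1-P_{\epsilon(i_s)})\,\mu_{i_s}(T_{i_s})\cdots\mu_n(T_n)\,1_B. \]
The key structural point is that, because $i_1 < \cdots < i_m$, every position to the right of $i_s$ is unprojected in $\Phi^{(s)}$, so the tail $\mu_{i_s}(T_{i_s})\cdots\mu_n(T_n)1_B$ carries no projections, whereas the left factor carries projections only at $i_1,\ldots,i_{s-1}$.

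I would first handle the unprojected tail. By Theorem \ref{LRcountingtheorem}, $\mu_{i_s}(T_{i_s})\cdots\mu_n(T_n)1_B = \sum_{D} c_D E_D(\mu_{i_s}(T_{i_s}),\ldots,\mu_n(T_n))$ summed over $D \in LR^{\text{lat}}(\chi|_{\{i_s,\ldots,n\}},\epsilon|_{\{i_s,\ldots,n\}})$, and then Lemma \ref{boolprojLR}, applied to $1-P_{\epsilon(i_s)}$, prunes this to a sum over the complementary set $S_{i_s} = LR^{\text{lat}}(\chi|_{\{i_s,\ldots,n\}},\epsilon|_{\{i_s,\ldots,n\}},i_s)^c$, namely those diagrams having a top-gap spine of colour different from $\epsilon(i_s)$. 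Thus $(1-P_{\epsilon(i_s)})\mu_{i_s}(T_{i_s})\cdots\mu_n(T_n)1_B = \sum_{D\in S_{i_s}} c_D E_D(\mu_{i_s}(T_{i_s}),\ldots,\mu_n(T_n))$, which already matches the definition of $S_{i_s}$ in the statement.

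It then remains to apply the left factor $\mu_1^{(s)}(T_1)\cdots\mu_{i_s-1}^{(s)}(T_{i_s-1})$, one operator at a time, from right to left. This is the step I expect to be the main obstacle: the extended calculus of Lemma \ref{extendedLRcountingtheorem} describes only how an \emph{unprojected} operator $\mu_t(T_t)$ acts, carrying a sum indexed by $S$ to one indexed by the $\chi$-extension $S^{\chi}$, whereas the left factor also contains the projections $P_{\epsilon(i_r)}$ at $i_1,\ldots,i_{s-1}$. The resolution is that a projection never enlarges the index set: by Lemma \ref{boolprojLR}, applying $P_{\epsilon(i_r)}$ only intersects the current diagram set with $LR^{\text{lat}}(\cdot,\cdot,i_r)$. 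Hence applying the left factor alternates extensions (from each $\mu_t(T_t)$) with restrictions (from each projection), and since $\chi$-extension is monotone under inclusion — an extension of a diagram in $S_1 \subseteq S_2$ is also an extension of one in $S_2$ — and composes correctly by Remark \ref{extofext}, the resulting index set is contained in $(S_{i_s})^{\chi} = S_{i_s}^{\chi}$. Therefore $\Phi^{(s-1)} - \Phi^{(s)} = \sum_{D\in S'_s} c'_D E_D(\mu_1(T_1),\ldots,\mu_n(T_n))$ for some $S'_s \subseteq S_{i_s}^{\chi}$ and suitable constants, and summing over $s$ gives the claim with $S = \bigcup_{s=1}^m S'_s \subseteq \bigcup_{j=1}^m S_{i_j}^{\chi}$. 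The case $m=0$ is the empty telescope, where the two products coincide and $S = \emptyset$.
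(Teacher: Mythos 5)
Your proposal is correct, and at bottom it is the paper's own decomposition: your telescope $\sum_{s}(\Phi^{(s-1)}-\Phi^{(s)})$ produces exactly the terms (projections at $i_1,\ldots,i_{s-1}$, then $1-P_{\epsilon(i_s)}$, then an unprojected tail) that the paper's induction on $m$ produces when unrolled, and both treatments expand the $(1-P_{\epsilon(i_s)})$-tail via Theorem \ref{LRcountingtheorem} and Lemma \ref{boolprojLR} and then push leftward with Lemma \ref{extendedLRcountingtheorem} and Remark \ref{extofext}. The one place where the executions genuinely differ is the point you yourself flag as the main obstacle: in the paper's induction, the inductive hypothesis has already converted the projected tail into a diagram sum, so each step only ever applies a \emph{single} projection sitting immediately adjacent to that sum, followed by a block of purely unprojected operators -- Lemma \ref{boolprojLR} once, then Lemma \ref{extendedLRcountingtheorem} once, with no interleaving. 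Your telescoping instead leaves the projections $P_{\epsilon(i_1)},\ldots,P_{\epsilon(i_{s-1})}$ interspersed in the left factor, so you must walk left one operator at a time, alternating extension steps with restriction steps, and close the argument with the observation that $\chi$-extension is monotone under inclusion and composes by Remark \ref{extofext}; this observation is correct (it is immediate from the definition of $S^\chi$) but it is an extra piece of bookkeeping the paper's organization avoids. In exchange, your version makes the error terms completely explicit in one pass and dispenses with the nested induction, so either write-up is acceptable; if you formalize yours, state the monotonicity of $S \mapsto S^\chi$ as a one-line remark alongside Remark \ref{extofext}, since it carries the weight of your final containment $S'_s \subseteq S_{i_s}^\chi$.
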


\begin{proof}
    We proceed by induction on $m$, letting $n \in \bN$ be otherwise arbitrary. When $m = 0$, we notice that $\mu_j'(T_j) = \mu_j(T_j)$ for all $1 \leq j \leq n$ and so we can take $S = \emptyset$.

    Now suppose for $m \geq 0$ the claim holds, fix $n \in \bN$ with $n \geq m+1$ and let $1 \leq i_1 < \cdots < i_{m+1} \leq n$. For convenience, let $\eta_a = \mu_1(T_1) \cdots P_{\epsilon(i_1)}\mu_{i_1}(T_{i_1}) \cdots\mu_n(T_n)1_B$ and $\eta_b = \mu_1(T_1) \cdots (1-P_{\epsilon(i_1)})\mu_{i_1}(T_{i_1}) \cdots\mu_n(T_n)1_B $. Then we have
    \[
    \mu_1(T_1) \cdots \mu_n(T_n)1_B = \eta_a + \eta_b 
    \]

    We take each piece individually. Let $\tilde{\chi} = \chi|_{\{i_1, \ldots, n\}}$ and $\tilde{\epsilon} = \epsilon|_{\{i_1, \ldots, n\}}$. By hypothesis, there is some $S_0 \subseteq \bigcup_{j = 2}^{m+1} S_{i_j}^{\tilde{\chi}}$ where
    \begin{align*} \mu_{i_1}(T_{i_1}) \cdots\mu_n(T_n)1_B = &\mu_{i_1}(T_{i_1})\mu_{i_1+1}'(T_{i_1+1}) \cdots \mu_n'(T_n)1_B 
    \\ 
    &\ \ + \sum_{D \in S_0}c_{D,S_0} E_D(\mu_{i_1}(T_{i_1}), \ldots, \mu_n(T_n)).
    \end{align*}

    Now by definition, $\mu_1(T_1) \cdots P_{\epsilon(i_1)}\mu_{i_1}(T_{i_1}) = \mu_1'(T_1) \cdots \mu_{i_1}'(T_{i_1})$, so that
    \[ \eta_a = \mu_1(T_1) \cdots P_{\epsilon(i_1)}\mu_{i_1}(T_{i_1})\cdots \mu_n(T_n) 1_B = \mu_1'(T_1) \cdots \mu_n'(T_n)1_B + \eta_a'\]
    where 
    \[ \eta_a' = \mu_1(T_1) \cdots P_{\epsilon(i_1)} \sum_{D \in S_0} c_{D,S_0}E_D(\mu_{i_1}(T_{i_1}), \ldots, \mu_n(T_n)).\]
    Applying Lemmas \ref{extendedLRcountingtheorem} and \ref{boolprojLR},
    \[ \eta_a' = \sum_{D \in (LR^{\text{lat}}(\tilde{\chi},\tilde{\epsilon},i_1)\cap S_0)^\chi} c_{D,S_0} E_D(\mu_1(T_1), \ldots, \mu_n(T_n)). \]
    It is readily verified, making use of Remark \ref{extofext} that 
    \[ S_a := (LR^{\text{lat}}(\tilde{\chi}, \tilde{\epsilon}, i_1) \cap S_0)^\chi \subseteq \bigcup_{j = 1}^{k+1} S_{i_j}.\]

    As for $\eta_b$, note that Theorem \ref{LRcountingtheorem} says
    \[ \mu_{i_1}(T_{i_1}) \cdots\mu_n(T_n)1_B = \sum_{D \in LR^{\text{lat}}(\chi,\epsilon)} c_DE_D(\mu_{i_1}(T_{i_1}), \ldots,\mu_n(T_n)),\]
    so by Lemma \ref{boolprojLR},
    \[ (1-P_{\epsilon(i_1)})\mu_{i_1}(T_{i_1}) \cdots\mu_n(T_n)1_B = \sum_{D \in LR^{\text{lat}}(\tilde{\chi},\tilde{\epsilon}, i_1)^c} c_DE_D(\mu_{i_1}(T_{i_1}), \ldots, \mu_n(T_n)).\]
    Letting $S_b = (LR^{\text{lat}}(\tilde{\chi}, \tilde{\epsilon}, i_1)^c)^{\chi}$ and appending our initial operators on grants us 
    \begin{align*}
        \eta_b = &\mu_1(T_1)\cdots (1-P_{\epsilon(i_1)})\mu_{i_1}(T_{i_1}) \cdots\mu_n(T_n)1_B \\
        = &\sum_{D \in S_b} c_{D,S_b}E_D(\mu_1(T_1), \ldots, \mu_n(T_n)).
    \end{align*}
    Notice immediately from its definition that $S_b \subseteq \bigcup_{j = 1}^{k+1}S_{i_j}^\chi$. So if we let $S = S_a \cup S_b$ we have $S \subseteq \bigcup_{j = 1}^{k+1} S_{i_j}^\chi$. Define, for each $D \in S$, $c_{D,S} := c_{D, S_a} + c_{D, S_b}$ where $c_{D, S_a} = 0$ if $D \notin S_a$ and $c_{D, S_b} = 0$ if $D \notin S_b$, then
    \[ \eta_a' + \eta_b = \sum_{D \in S} c_{D,S}E_D(\mu_1(T_1), \ldots, \mu_n(T_n))\]
    so that
    \begin{align*}
        \mu_1(T_1) \cdots \mu_n(T_n)1_B &= \mu_1'(T_1) \cdots \mu_n'(T_n)1_B + \eta_a' + \eta_b \\
        &= \mu_1'(T_1)\cdots \mu_n'(T_n)1_B + \sum_{D \in S}c_{D,S}E_D(\mu_1(T_1), \ldots, \mu_n(T_n)),
    \end{align*}
    as desired.
\end{proof}

\section{Free-Free-Boolean Independence with Amalgamation}

In \cite{liu17}, Liu introduced a notion of \textit{free-free-Boolean} independence for triples of algebras. In this section we generalize this definition to the amalgamated setting, and then in the following section we will show how this independence can arise in the context of bi-free probability, in both a representational sense, and combinatorially with the bi-free cumulants. 

\begin{definition}
    A triple of $B$-faces in a $B$-$B$-noncommutative probability space $(\A, E_\A, \varepsilon)$ is a triple $(A,C,D)$ where $A$, $C$, and $D$ are subalgebras of $\A$ with $A$ and $C$ unital such that 
    \[ \varepsilon( B \otimes 1_B) \subseteq A \subseteq \A_\ell, \quad \varepsilon(1_B \otimes B) \subseteq C \subseteq \A_r, \quad \text{ and } \varepsilon(B \otimes 1_B) D \varepsilon(B \otimes 1_B) \subseteq D \subseteq \A_\ell. \]
\end{definition}

\begin{definition}
    Let $\Gamma = \{(A_k,C_k,D_k)\}_{k \in K}$ be a family of triples of $B$-faces in $(\A, E_\A, \varepsilon)$. The \textit{joint distribution} of $\Gamma$ is the $B$-valued functional $\mu_\Gamma = E_\A|_{\text{alg}(\Gamma)}$.
\end{definition}

\begin{remark}
    If we know how the distribution behaves on words of operators from $\Gamma$, then we know the total distribution by linearity. To this end, we will exclusively discuss the expectations of words, called \textit{moments}. An arbitrary word of operators from $\Gamma$ will require an arbitrary $n \in \bN$, a function $\chi : \{1, \ldots, n\} \to \{\ell, r, b\}$ and a colouring $\epsilon : \{1, \ldots, n \} \to K$, as well as elements $Z_1, \ldots, Z_n$ such that 
    \[ Z_i \in \begin{cases} A_{\epsilon(i)} \text{ if } \chi(i) = \ell , \\ C_{\epsilon(i)} \text{ if } \chi(i) = r, \\ D_{\epsilon(i)} \text{ if }\chi(i) = b\end{cases}.\]
    Then $\mu_\Gamma$ is completely determined by the moments
    \[ \mu_\Gamma(Z_1 \cdots Z_n) = E_\A(Z_1 \cdots Z_n).\] 
\end{remark}

\begin{definition} \label{ffbdef}
    Let $(\A, E_\A, \varepsilon)$ be a $B$-$B$-noncommutative probability space. Given a family  $\Gamma = \{(A_k,C_k, D_k)\}_{k \in K}$ of triples of $B$-faces in $(\A, E_\A, \varepsilon)$, suppose there exists a family of $B$-$B$-bimodules with specified $B$-projections $\{(\X_k,\osc{\X}_k, p_k)\}_{k \in K}$ and unital homomorphisms $\ell_k : A_k \to \L_\ell(\X_k)$ and $r_k : C_k \to \L_r(\X_k)$, and (not necessarily unital) homomorphisms $m_k : D_k \to \L(\X_k)$. Let $(\X, \osc{\X}, p)$ be the reduced free product of the family $\{(\X_k, \osc{\X}_k, p_k)\}_{k \in K}$. Let $\lambda_k$ and $\rho_k$ denote the corresponding left and right regular representations of $\L(\X_k)$ on $\X$, and $P_k$ denote the $k^{\text{th}}$ Boolean projection. The family $\Gamma$ is said to be \textit{free-free-Boolean independent with amalgamation over $B$} if the joint distribution $\mu_\Gamma$ is equal to the joint distribution of the family 
    $\{(\lambda_k(\ell_k(A_k)), \rho_k(r_k(C_k)), P_k(\lambda_k(m_k(D_k)))P_k)\}_{k \in K}$ in $(\L(\X), E_{\L(\X)})$.
\end{definition}

\begin{notation}
    The triples of $B$-faces throughout the remainder of this work will be related to free-free-Boolean families. 
    Thus, we will simplify our notation by denoting a triple of $B$-faces of $\A$ as $(A^\ell, A^r, A^b)$.
\end{notation}

\begin{remark} \label{ffbimpliesbifree}
    Notice that from the definition above, if $\Gamma = \{(A^\ell_k, A^r_k,A^b_k)\}_{k \in K}$ is a free-free-Boolean independent family, then $\pi = \{(A^\ell_k,A^r_k)\}_{k \in K}$ is a bi-free family. It can be easily verified also that the family $\{ A^b_k\}_{k \in K}$ is a Boolean independent family.

    Furthermore, it is readily verified that in fact, the family of pairs of algebras $\{(A_k^\ell, A_k^b)\}_{k \in K}$ are free-Boolean independent with amalgamation over $B$ in the sense of \cite{liu+zhong}, noting that $A_k^b$ is a $B$-$B$-bimodule by left and right multiplication by $\epsilon(B \otimes 1_B)$.

    Lastly, in the case where $B=\bC$, this definition readily conforms with a free-free-Boolean independent family as defined in \cite{liu17}. Indeed, a triple of faces so-defined can be found to embed faithfully in a single non-commutative probability space by constructing a suitable free product.
\end{remark}



\begin{remark}
    A natural property of independence of a family of algebras or tuples of algebras should be that the overall joint distribution should depend only on the individual (joint) distributions within the family. This is true of bi-freeness, but it is also readily seen to  be true of free-free-Boolean independence. Indeed, one can observe that by rearranging the equation in Lemma \ref{keyprojLRlemma} and using bi-multiplicativity, one sees that the expectation of a word coming from a free-free-Boolean independent family is determined from moments on the individual triples. 
\end{remark}

As a simple consequence of this, one can choose any space on which to represent our free-free-boolean independent family without fear of losing our independence.

\begin{proposition}
    Let $\Gamma = \{ (A^\ell_k, A^r_k, A^b_k)\}_{k \in K}$ be a family of triples of $B$-faces in a $B$-$B$-non-commutative probability space $(\A,E_\A, \epsilon)$. If we have for each $k \in K$, a $B$-$B$-bimodule $(\X'_k, \osc{X}'_k, p'_k)$ and homomorphisms $\ell_k' : A^\ell_k \to \L_\ell(\X_k')$, $r_k' : A^r_k \to \L_r(\X_k')$ and $m_k' : A^b_k \to \L(\X_k')$, such that that the joint distribution of the triple $(A^\ell_k, A^r_k, A^b_k)$ equals the joint distribution of the image $(\ell_k'(A^\ell_k), r_k'(A^r_k), m_k'(A^b_k))$ for each $k \in K$, then $\Gamma$ is free-free-Boolean independent if and only if $\Gamma' = \{(\ell_k'(A^\ell_k), r_k'(A^r_k), m_k'(A^b_k))\}_{k \in K}$ is free-free-Boolean independent. In this case, the joint distribution of $\Gamma$ is equal to the joint distribution of $\Gamma'$.
\end{proposition}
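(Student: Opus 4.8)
The plan is to reduce the whole statement to the principle recorded in the preceding remark: that the joint distribution of a free-free-Boolean independent family is a \emph{universal} function of the individual joint distributions of its triples. Concretely, I would first isolate the following claim. There is a rule $\Phi$, depending only on the combinatorial data $(\chi,\epsilon)$ of a word, such that for \emph{any} free-free-Boolean independent family $\Delta = \{(B^\ell_k, B^r_k, B^b_k)\}_{k\in K}$ one has $\mu_\Delta(Z_1 \cdots Z_n) = \Phi\big(\{\mu^{(k)}_\Delta\}_{k}\big)$, where $\mu^{(k)}_\Delta$ denotes the joint distribution of the single triple $(B^\ell_k, B^r_k, B^b_k)$. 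Granting this, the proposition becomes a matter of comparing the inputs fed to $\Phi$ for $\Gamma$ and for $\Gamma'$.

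To produce the universal rule I would realize $\Delta$ on the standard reduced free product via Definition~\ref{ffbdef}, so that $\mu_\Delta(Z_1 \cdots Z_n) = E_{\L(\X)}(\mu_1(T_1)\cdots \mu_n(T_n))$ with a left, right, or Boolean operator for each letter, the Boolean letters carrying the projections $P_{\epsilon(i)}$ on both sides. I would then invoke Lemma~\ref{keyprojLRlemma} to push all the Boolean projections into position, expand the resulting vector with Theorem~\ref{LRcountingtheorem}, and apply $p$. The terms that survive $p$ are exactly those indexed by $LR_0$ diagrams, i.e.\ by bi-non-crossing partitions, and bi-multiplicativity (Definition~\ref{bimultiplicative}) collapses each surviving term into a product of moments of words living entirely in one face $k$. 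Those single-face moments coincide with the values of $\mu^{(k)}_\Delta$, so the whole expression depends only on $\{\mu^{(k)}_\Delta\}_k$ with coefficients determined by $(\chi,\epsilon)$; this is the desired $\Phi$.

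With $\Phi$ available the remaining work is light. By hypothesis $\mu^{(k)}_\Gamma = \mu^{(k)}_{\Gamma'}$ for every $k$, so $\Phi$ receives identical inputs for the two families. Assuming $\Gamma$ is free-free-Boolean independent, I realize each triple $(\ell'_k(A^\ell_k), r'_k(A^r_k), m'_k(A^b_k))$ on the given bimodule $\X'_k$, form the reduced free product $\X'$, and pass to the standard representation; by Definition~\ref{ffbdef} the resulting family is free-free-Boolean independent, and by the universal rule its joint distribution equals $\Phi(\{\mu^{(k)}_{\Gamma'}\}) = \Phi(\{\mu^{(k)}_\Gamma\}) = \mu_\Gamma$. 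This simultaneously witnesses the independence of $\Gamma'$ and gives $\mu_{\Gamma'} = \mu_\Gamma$. The converse implication is obtained by the same construction with the roles of $\Gamma$ and $\Gamma'$ exchanged, the hypothesis being symmetric in the two families.

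The main obstacle is the universality step. One must verify that moving the Boolean projections through the word via Lemma~\ref{keyprojLRlemma} and then applying $p$ genuinely leaves only single-face data, with the surviving coefficients depending on $(\chi,\epsilon)$ alone and not on the representing bimodules; this is precisely where the $LR$-diagram calculus of Section~3 carries the bookkeeping. A smaller point to confirm along the way is the single-face consistency, namely that the moments of the standard representation restricted to one colour reproduce the individual distribution on $\X'_k$, so that the quantities entering $\Phi$ are honestly the $\mu^{(k)}$. Once these are in place, both the equality of joint distributions and the two-way implication are formal.
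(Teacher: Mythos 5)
Your proposal is correct and follows essentially the same route as the paper: the paper states this proposition as an immediate consequence of the preceding remark, which asserts exactly your universality principle --- that rearranging the identity of Lemma \ref{keyprojLRlemma}, expanding via the $LR$-diagram calculus, and applying bi-multiplicativity expresses any moment of a free-free-Boolean independent family as a universal function of the individual triple distributions. Your write-up simply makes explicit the details (the survival of only $LR_0$ diagrams under $p$, and the single-face consistency of the represented moments) that the paper leaves implicit.
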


In particular, we will make use of the $B$-$B$-bimodule with specified $B$-projection from \ref{bbbimoduleconstruction}.

\section{How Free-Free-Boolean Independence with Amalgamation Arises in Bi-Free Probability}

Now we turn our attention to how free-free-Boolean independence embeds in bi-free probability. 

\subsection{Abstract Bi-Free Free-Free-Boolean Structures}

We begin by defining a structure within with bi-free independence that produces free-free-Boolean independence. We proceed with inspiration from \cite{Sko14}. Since we are trying to model a free-free-Boolean independent system, we upgrade Skoufranis' bi-free Boolean $B$-systems with additional bi-free pairs of $B$-faces. 

\begin{definition} \label{ffbsystem}
    Let $(\A, E_\A, \varepsilon)$ be a $B$-$B$-non-commutative probability space and fix a family $\{ (A_k^\ell, A_k^r, C_k, D_k)\}_{k \in K}$ of quadruples of subalgebras of $\A$ such that each of $(A_k^\ell, A_k^r)$ are pairs of $B$-faces of $\A$ and the family of pairs $\{ (\text{alg}(A_k^\ell, C_k), \text{alg}(A_k^r, D_k))\}_{k \in K}$ are bi-free over $B$. 
    For each $k \in K$, let $C_k' \subseteq C_k$ and $D_k' \subseteq D_k \cap \A_\ell$ be subsets such that $L_bC_k' \subseteq C_k'$ and $C_k' L_b \subseteq C_k'$ for all $b \in B$. We say that $\{(A_k^\ell, A_k^r, C_k', D_k')\}_{k \in K}$ is a \textit{bi-free ffb $B$-system} if for all $n \geq 0$ and $k \in K$,
    \begin{enumerate}
        \item $C_k' \bA_k C_k' = \{0\} = D_k' \bA_k D_k' $, 
        \item $E_\A( \bA_k C_k' \bA_k (D_k' \bA_k C_k')^n \bA_k) = 0$, and
        \item $E_\A( \bA_k D_k' \bA_k(C_k' \bA_k D_k')^n \bA_k) = 0$
        
    \end{enumerate}
    where $\bA_k := \text{alg}(A_k^\ell, A_k^r)$.
    
\end{definition}

\begin{remark}
    This extends the notion of a bi-free Boolean $B$-system from \cite{Sko14}. In particular, since the algebras $\A_k^\ell$ and $\A_k^r$ are unital, it follows that the family of pairs $\{(C_k', D_k')\}_{k \in K}$ is a bi-free boolean $B$-system. It was shown in \cite{Sko14} that therefore the algebras $\{\text{alg}(C_k'D_k')\}_{k \in K}$ Boolean independent with amalgamation over $B$.
\end{remark}

Such a bi-free ffb $B$-system as defined in Definition \ref{ffbsystem} forms the triple of $B$-faces
\[ \{ (A_k^\ell, A_k^r, \text{alg}(C_k'D_k'))\}_{k \in K}.\]
Indeed, since $D_k' \in \A_r \cap \A_\ell$, and $L_b C_k', C_k'L_b \subseteq C_k'$, we have that 
\[ \varepsilon(B \otimes 1_B) \text{alg}(C_k'D_k')\varepsilon(B\otimes 1_B) \subseteq \text{alg}(C_k'D_k').\]
We will show that such a triple of faces is free-free-Boolean independent with amalgamation over $B$. But first, let's see how such a collection can house any free-free-Boolean independent family.

\subsection{The Main Construction}

The following construction motivates our definition of bi-free ffb $B$-system. This will allow us to embed free-free-Boolean families into bi-free families. Our strategy follows the idea of Construction 4.2.3 of \cite{Sko14}, but tweaked to use an appropriate $B$-$B$-bimodule with specified $B$-projection.

Let us fix a $B$-$B$-noncommutative probability space $(\A, E_\A, \varepsilon)$. Fix a free-free-Boolean independent family $\{(A_k^\ell, A_k^r, A_k^b)\}_{k \in K}$ in $\A$. Let 
\[ \A' = B \oplus \ker(E_\A)/ \text{span}\{TL_b - TR_b : T \in \A, b \in B\}\]
as in Theorem \ref{faithfulbimodule}, and let $\theta : \A \to \A'$ be the corresponding embedding. Then let $(\Y_k, \osc{\Y_k},q_k)$ be a copy of the $B$-$B$-bimodule with specified $B$-projection as in Remark and Notation \ref{bbbimoduleconstruction}. That is, $\Y_k = \A' \oplus \A'$ and 
\[ \osc{\Y}_k = \ker(E_\A)/ \text{span}\{TL_b - TR_b : T \in \A, b \in B\} \oplus \A' .\]

Now, for each $k \in K$ and any $Z \in A_k^b$, consider the operators $T_{Z}, S_{1_B} \in \L(\Y_k)$ defined by
\[ T_Z (\xi_1 \oplus \xi_2) = \theta(Z) \xi_2 \oplus 0 \quad \text{ and } \quad S_{1_B} (\xi_1 \oplus \xi_2) = 0 \oplus \xi_1, \]
for all $\xi_1, \xi_2 \in \A'$. It is simple to see that $S_{1_B} \in \L_\ell(\Y_k) \cap \L_r(\Y_k)$ and that $T_Z \in \L_\ell(\Y_k)$. Indeed, let $\xi_1, \xi_2 \in \A'$ and $b \in B$, then 
\[ S_{1_B}((\xi_1 \oplus \xi_2) \cdot b) = S_{1_B}(\theta(R_b)\xi_1 \oplus \theta(R_b) \xi_2) = 0 \oplus \theta(R_b)\xi_1 = (0 \oplus \xi_1) \cdot b = (S_{1_B}(\xi_1 \oplus \xi_2)) \cdot b, \]
showing that $S_{1_B} \in \L_\ell(\Y_k)$. One can show similarly that $S_{1_B} \in \L_r(\Y_k)$. Showing that $T_Z \in \L_\ell(\Y_k)$ is similar, but makes use of the fact that $Z \in A_k^b \subseteq \A_\ell$ so that $Z R_b = R_b Z$.

Also, for any $Z \in \A$ and any $k \in K$, define $D_Z \in \L(\Y_k)$ by
\[ D_Z(\xi_1 \oplus \xi_2) = \theta(Z)\xi_1 \oplus \theta(Z) \xi_2.\]
If $Z \in \A_\ell$, then $D_Z \in \L_\ell(\Y_k)$. Similarly if $Z \in \A_r$ then $D_Z \in \L_r(\Y_k)$.

Let $(\Y, \osc{\Y}, q)$ be  the reduced free product of the family $\{(\Y_k, \osc{\Y}_k, q_k)\}_{k \in K}$. For each $k \in K$ let $\lambda_k$ and $\rho_k$ be respectively the left and right regular representations of $\L(\Y_k)$ on $\Y$. For $Z \in \A$, define
\[ T_{k,Z} := \lambda_k(T_Z) \quad \text{ and } S_{k, 1_B} := \rho_k(S_{1_B}).\]

For each $k \in K$, define $C'_k = \{ T_{k,Z} \ |\  Z \in A_k^b\}$ and $D'_k = \{ S_{k,1_B}\}$. Let 
\[ \tilde{A}_k^\ell = \{ \lambda_k(D_Z) \mid Z \in A_k^\ell\} \quad  \text{ and } \quad \tilde{A}_k^r = \{ \rho_k(D_Z) \mid Z \in A_k^r\}.\]
Notice that for each $k \in K$, $(\tilde{A}_k^\ell, \tilde{A}_k^r)$ is a pair of $B$-faces of $\L(\Y)$.

Then we check that the family $\{ (\tilde{A}_k^\ell, \tilde{A}_k^r,C_k', D_k')\}_{k \in K} $ is a bi-free ffb $B$-system in $\L(\Y)$. For property (1), fix $k \in K$ and $n \in \bN$, $\chi : \{1, \ldots, n\} \to \{\ell, r \}$ and let $Z_1, \ldots, Z_n \in \A$ such that $Z_i \in A_k^{\chi(i)}$. For convenience, let $Z = Z_1 \cdots Z_n$ so that $D_Z = D_{Z_1} \cdots D_{Z_n}$. Then for any $\xi_1 \oplus \xi_2 \in \Y_k$, 
\[ S_{1_B} D_Z S_{1_B} (\xi_1 \oplus \xi_2) = S_{1_B}D_Z( 0 \oplus \xi_1) = S_{1_B}( 0 \oplus \theta(Z) \xi_1) = 0. \]
Similarly, for any $A_1, A_2 \in A_k^b$, 
\[ T_{A_1}D_ZT_{A_2} (\xi_1 \oplus \xi_2) = T_{A_1} D_Z (\theta(A_2)\xi_2 \oplus 0) = T_{A_1} (\theta(Z)\theta(A_2) \xi_2 \oplus 0) = 0. \]

Now stepping up to the free product, for each $1 \leq i \leq n$ let $D_{Z_i}'$ denote $\lambda_k(D_{Z_i})$ if $\chi(i) = \ell$ and $\rho_k(D_{Z_i})$ if $\chi(i) = r$. Since $Z_i \in A_k^{\chi(i)}$ for each $1 \leq i \leq n$, if $\eta \in \Y_k \subseteq \Y$ then
\[ \rho_k(S_{1_B})D_{Z_1}'\cdots D_{Z_n}'\rho_k(S_{1_B})\eta = S_{1_B}D_{Z_1} \cdots D_{Z_n}S_{1_B}\eta = 0,\]
and 
\[ \lambda_k(T_{A_1})D_{Z_1}'\cdots D_{Z_n}'\lambda_k(T_{A_2})\eta = T_{A_1}D_{Z_1}\cdots D_{Z_n} T_{A_2}\eta = 0. \]
If $\eta \in \Y \ominus\Y_k$, then we have that for any $X_1, X_2 \in \L(\Y_k)$
\[ \lambda_k(X_1)\rho_k(X_2)\eta = \rho_k(X_2)\lambda_k(X_1)\eta.\]
With this in mind, form the (ordered) products
\[ Z^\ell = \prod_{\substack{1 \leq i \leq n \\ \chi(i) = \ell}} Z_i \quad \text{ and } \quad Z^r = \prod_{\substack{1 \leq i \leq n \\ \chi(i) = r}} Z_i.\]
Then, by commuting lefts with rights in their action on $\eta \in \Y \ominus \Y_k$ and combining with our understanding of the annihilating action from above, 
\[ \rho_k(S_{1_B})D_{Z_1}'\cdots D_{Z_n}'\rho_k(S_{1_B}) \eta = \lambda_k(D_{Z^\ell})(\rho_k(S_{1_B}D_{Z^r}S_{1_B})\eta )= \lambda_k(D_{Z^\ell})0 = 0. \]
Similarly, 
\[ \lambda_k(T_{A_1}) D_{Z_1}' \cdots D_{Z_n}'\lambda_k(T_{A_2})\eta = \rho_k(D_{Z^r})(\lambda_k(T_{A_1}D_{Z^\ell}T_{A_2}) \eta) = \rho_k(D_{Z^r}) 0 = 0.\]
Combining these observations together, we get that property (1) holds. 

Property (2) of Definition \ref{ffbsystem} is easy to see, as if $Z$ is as above and $A \in A_k^b$ then
\[ T_AD_Z(1_B \oplus 0) = T_A(\theta(Z)1_B \oplus 0) = 0.\]
Passing to the free product by left and right regular representations does not make a difference here as we are only acting on the element $1_B \in \Y_k$.

Lastly, let $n \in \bN$, $A_1, \ldots, A_n \in A_k^b$, and $Z_1, Z_2, \ldots, Z_{n+1} \in \text{alg}( A_k^\ell, A_k^r)$. Note that 
\begin{align*}
T_{A_n}D_{Z_n}S_{1_B}D_{Z_{n+1}}1_B &= T_{A_n}D_{Z_n}S_{1_B}(\theta(Z_{n+1})1_B \oplus 0) \\
  &= T_{A_n}D_{Z_n}(0 \oplus \theta(Z_{n+1})1_B) \\
  &= T_{A_n}(0 \oplus \theta(Z_n)\theta(Z_{n+1})1_B)\\
  &= \theta(A_n)\theta(Z_n)\theta(Z_{n+1})1_B \oplus 0\\
  &= \theta(A_nZ_nZ_{n+1})1_B.
\end{align*}

So inductively,
\begin{align*}
D_{Z_1}S_{1_B} D_{Z_2} T_{A_1} D_{Z_3} S_{1_B} \cdots T_{A_n}D_{Z_{n+1}}S_{1_B}D_{Z_{n+1}}1_B &= D_{Z_1}S_{1_B} (\theta(Z_2A_1Z_3 \cdots A_nZ_{n+2})1_B \oplus 0)\\
  &= D_{Z_1}(0 \oplus \theta(Z_2A_1Z_3 \cdots A_nZ_{n+2})1_B) \\
  &= 0 \oplus \theta(Z_1Z_2A_1 \cdots A_nZ_{n+2})1_B,
\end{align*}
which has expectation 0. So property (3) follows as well.

So our family $\{(\tilde{A}_k^\ell, \tilde{A}_k^r, C_k', D_k')\}_{k \in K}$ is a bi-free ffb $B$ system. We get the following.

\begin{proposition} \label{partialffb}
    Given a free-free-Boolean independent family $\Gamma = \{ (A_k^\ell, A_k^r, A_k^b)\}_{k \in K}$ in a $B$-$B$-non-commutative probability space $(\A, E_\A, \varepsilon)$, there exists a bi-free ffb $B$-system $\{ (\tilde{A}_k^\ell, \tilde{A}_k^r, C_k', D_k')\}_{k \in K}$ inside a $B$-$B$-non-commutative probability space $(\A', E_{\A'}, \varepsilon')$ and for each $k \in K$ there are injective $B$-linear maps $\alpha_{\ell,k} : A_k^\ell \to \tilde{A}_k^\ell$, $\alpha_{r,k} : A_k^r \to \tilde{A}_k^r$ and $\alpha_{b,k} : A_k^b \to \text{alg}(C_k'D_k')$ such that whenever $n \in \bN$, $\chi :\{ 1 , \ldots, n\} \to \{ \ell, r, b\}$, and $Z_i \in A_{k}^\chi(i)$ for all $1 \leq i \leq n$, we have 
    \[ E_{\A'}(\alpha_{\chi(1),k}(Z_1) \cdots \alpha_{\chi(n),k}(Z_n)) = E_\A(Z_1 \cdots Z_n).\]
\end{proposition}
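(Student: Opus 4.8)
The construction carried out just above already supplies every object named in the statement, so the plan is to package that data and then verify the single identity that remains. For the ambient space I take $(\L(\Y), E_{\L(\Y)}, \varepsilon')$ with $\varepsilon'(b_1 \otimes b_2) = L_{b_1}R_{b_2}$ acting on the reduced free product $\Y$; this is the triple called $(\A', E_{\A'}, \varepsilon')$ in the statement. The bi-free ffb $B$-system is the family $\{(\tilde{A}_k^\ell, \tilde{A}_k^r, C_k', D_k')\}_{k \in K}$ whose defining properties (1)--(3) of Definition \ref{ffbsystem} were checked above. For the embedding maps I set
\[ \alpha_{\ell,k}(Z) = \lambda_k(D_Z), \qquad \alpha_{r,k}(Z) = \rho_k(D_Z), \qquad \alpha_{b,k}(Z) = \lambda_k(T_Z)\rho_k(S_{1_B}) = T_{k,Z}S_{k,1_B}, \]
whose images lie in $\tilde{A}_k^\ell$, $\tilde{A}_k^r$, and $\text{alg}(C_k'D_k')$ respectively.

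I would next dispatch the elementary properties. Each map is $B$-linear because $Z \mapsto D_Z$ and $Z \mapsto T_Z$ are assembled from the $B$-$B$-bimodule homomorphism $\theta$ together with $\theta(L_b) = L_b$, so they intertwine the respective $B$-actions (for $\alpha_{b,k}$ one uses that the bimodule structure on $A_k^b$ is the $\varepsilon(B \otimes 1_B)$-action, matching that on $\text{alg}(C_k'D_k')$). Injectivity is seen by evaluating on the vector $1_{\A'} \oplus 0 \in \Y_k$: one has $D_Z(1_{\A'} \oplus 0) = \theta(Z)1_{\A'} \oplus 0$ and likewise $T_Z S_{1_B}(1_{\A'} \oplus 0) = \theta(Z)1_{\A'} \oplus 0$, so equal images force $\theta(Z) = \theta(Z')$, whence $Z = Z'$ since $\theta$ is an embedding (Theorem \ref{faithfulbimodule}).

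The heart of the matter is the moment identity, whose key feature is that every letter of the word $\alpha_{\chi(1),k}(Z_1) \cdots \alpha_{\chi(n),k}(Z_n)$ carries the single colour $k$. Thus no genuine free-product interaction occurs and the whole computation descends to the single factor $\L(\Y_k)$. I would first note that $\Y_k$ is invariant under each operator appearing, since $D_Z \in \L_\ell(\Y_k)$ when $Z \in A_k^\ell \subseteq \A_\ell$, $D_Z \in \L_r(\Y_k)$ when $Z \in A_k^r \subseteq \A_r$, and $T_Z \in \L_\ell(\Y_k)$, $S_{1_B} \in \L_\ell(\Y_k) \cap \L_r(\Y_k)$; moreover $\lambda_k(a)|_{\Y_k} = a$ for $a \in \L_\ell(\Y_k)$ and $\rho_k(b)|_{\Y_k} = b$ for $b \in \L_r(\Y_k)$, which is immediate from $V_k(\eta) = \eta \otimes 1_B$ and $W_k(\eta) = 1_B \otimes \eta$ for $\eta \in \Y_k$. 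Hence applying the word to $1_B = 1_{\A'} \oplus 0 \in \Y_k$ equals applying the corresponding word of operators from $\{D_{Z_i}, T_{Z_i}S_{1_B}\}$ inside $\L(\Y_k)$. A short induction then finishes: since both $D_Z$ and $T_Z S_{1_B}$ send $\xi_1 \oplus 0 \mapsto \theta(Z)\xi_1 \oplus 0$, the second coordinate remains $0$ throughout while the first accumulates $\theta(Z_1) \cdots \theta(Z_n)1_{\A'} = \theta(Z_1 \cdots Z_n)1_{\A'}$. Applying the projection $q_k$, which returns the $B$-component of the first coordinate, and invoking that the $B$-part of $\theta(Z_1 \cdots Z_n)1_{\A'}$ equals $E_\A(Z_1 \cdots Z_n)$ (Theorem \ref{faithfulbimodule}), yields $E_{\L(\Y)}(\alpha_{\chi(1),k}(Z_1) \cdots \alpha_{\chi(n),k}(Z_n)) = E_\A(Z_1 \cdots Z_n)$, as required.

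The step I expect to be the real (if only bookkeeping) obstacle is the descent to the single factor: one must check that inserting the right-acting $\rho_k(S_{1_B})$ between left- and right-acting pieces never ejects a vector from $\Y_k$, and that the left and right regular representations of the one algebra $\L(\Y_k)$ genuinely restrict to the identity on this invariant summand. It is worth emphasizing that only the per-triple (fixed-$k$) distribution is needed here, so the free-free-Boolean independence of the assembled system plays no role in this proposition; that is the content of the subsequent theorem. Once the single-factor reduction is secured, the remaining induction is routine.
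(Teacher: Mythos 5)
Your proposal is correct and takes essentially the same approach as the paper: the paper's proof likewise sets $\A' = \L(\Y)$ with $\varepsilon'(b_1 \otimes b_2) = L_{b_1}R_{b_2}$, defines the identical maps $\alpha_{\ell,k}(Z) = \lambda_k(D_Z)$, $\alpha_{r,k}(Z) = \rho_k(D_Z)$, $\alpha_{b,k}(Z) = T_{k,Z}S_{k,1_B}$, cites the preceding construction for the ffb $B$-system properties, and derives expectation preservation from the same single-factor vector computation (its property (3) calculation, which is exactly your induction tracking $\xi_1 \oplus 0 \mapsto \theta(Z)\xi_1 \oplus 0$ inside $\Y_k$). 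Your explicit checks of injectivity and of the descent of $\lambda_k$, $\rho_k$ to the invariant summand $\Y_k$ merely spell out details the paper leaves implicit.
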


\begin{proof}
    Using the notation as in the construction above, we let $\A' = \L(\Y)$, $E_{\A'} = E_{\L(\Y)}$, and $\varepsilon'(b_1 \otimes b_2) = L_{b_1}R_{b_2} \in \L(\Y)$. Then define for each $k \in K$ 
    \[ \alpha_{\ell, k}(Z) = \lambda_k(D_Z), \quad \alpha_{r,k}(Z) = \rho_k(D_Z), \quad \text{ and } \quad \alpha_{b,k}(Z) = T_{k,Z}S_{k,1_B}\]
    for each $Z \in A_k^\ell, A_k^r, A_k^b$ respectively. Then each of these maps are $B$-linear and the family $\{ ( \alpha_{\ell, k}(A_k^\ell),\alpha_{r,k}(A_k^r), \{T_{k,Z}:Z \in A_k^b\}, \{S_{k,1_B}\}\}_{k \in K}$ is a bi-free ffb $B$-system, as we showed above. That these maps are jointly expectation preserving can be seen to also come from our calculation for property (3) above.
\end{proof}

This result lacks a punch as we have not yet shown that the free-free-Boolean independence is preserved, not allowing us to extend to the joint distribution of $\Gamma$. In order to demonstrate independence, we make use of the bi-free cumulants. We put the finishing touches on this result below with Theorem \ref{abstractstructuresareffb}.

\subsection{Bi-Free Cumulants for Free-Free-Boolean Independence}

Next, we make use of the bi-free cumulants to form free-free-Boolean cumulants. To do so, with inspiration coming from our construction above, we will split the Boolean-acting operators into a left and a right piece. So we need a way of connecting any colouring $\hat{\chi} : \{1, \ldots, n\} \to \{\ell, r, b\}$ with an appropriate $\ell$-$r$ colouring. 

\begin{definition}
    Given a map $\hat{\chi} : \{1,\ldots, n\} \to \{\ell, r, b\}$, define the \textit{$\ell$-$r$-replacement} of $\hat{\chi}$, to be the map $\upchi:\{1, \ldots, n + |\hat{\chi}^{-1}(\{b\})|\} \to \{\ell, r\}$ defined as follows. Let $f_{\hat{\chi}}(i) = i + |\hat{\chi}^{-1}(\{b\}) \cap [1, i-1]|$. Then, 
    \[ \upchi(i) := \begin{cases} \hat{\chi}(f_{\hat{\chi}}^{-1}(i)) & \text{ if } i \in f_{\hat{\chi}}(\{1, \ldots, n\} \setminus \hat{\chi}^{-1}(\{b\})) \\ 
    \ell &\text{ if } i \in f_{\hat{\chi}}(\hat{\chi}^{-1}(\{b\}) \\
    r &\text{ otherwise}\end{cases}. \]
\end{definition}

So in other words, $\upchi$ is $\ell$-$r$-replaced, if it replaces all instances of $b$ with $\ell, r$ in order. To do so, we must inflate the domain of $\hat{\chi}$ appropriately.

\begin{example}
    As a simple example, consider $\hat{\chi} = \ell b r b$ then the $\ell$-$r$-replacement of $\hat{\chi}$ is the map 
    \[ \chi = \ell ( \ell r) r (\ell r) = \ell \ell r r \ell r. \]
\end{example}

Now we collect all of those partitions which respect this replacement. That is, we want to keep those elements together in our partitions who came from a `$b$' colouring. We will show shortly that partition-moments which separate these elements will vanish.

\begin{definition}
    Given a map $\hat{\chi} : \{1, \ldots, n'\} \to \{\ell, r, b\}$, form its $\ell$-$r$-replacement $\chi : \{1, \ldots, n\} \to \{\ell, r\}$ where $n = n' + |\hat{\chi}^{-1}(\{b\})|$. Let $f_{\hat{\chi}}(i) = i + |\hat{\chi}^{-1}(\{b\})\cap[1,i-1]|$ for each $1 \leq i \leq n'$ and define $BNC_{\text{ffb}}(\hat{\chi})$ to be the subset of $BNC(\chi)$ such that $\pi \in BNC_{\text{ffb}}(\hat{\chi})$ if and only if whenever $i \in V \in \pi$ and $i \in f_{\hat{\chi}}(\hat{\chi}^{-1}(\{b\}))$, it follows that $i+1 \in V$.
\end{definition}

\begin{example}
    As an example, consider the map $\hat{\chi} = r b \ell$. Then it's $\ell$-$r$-replacement is the map $\chi =  r \ell r \ell$. So $BNC_{\text{ffb}}(\hat{\chi})$ is the set of bi-non-crossing partitions $\pi$ that lie above $0_{BNC_{\text{ffb}}(\hat{\chi})} = \{\{1\},\{2,3\},\{4\}\}$. There are four of these, each is pictured below.

\begin{center}

    \begin{tikzpicture}[baseline]

	\draw[thick, dashed] (0,2) -- (0,0) -- (1.5, 0) -- (1.5,2);

	\draw[fill=black] (1.5, 1.75) circle (0.05);

	\node[right] at (1.5, 1.75) {$1$};

	\draw[fill=black] (0, 1.25) circle (0.05);

	\node[left] at (0, 1.25) {$2$};

	\draw[fill=black] (1.5, 0.75) circle (0.05);

	\node[right] at (1.5, 0.75) {$3$};

	\draw[fill=black] (0, 0.25) circle (0.05);

	\node[left] at (0, 0.25) {$4$};

        \draw[thick] (0, 1.25) -- (0.75, 1.25) -- (0.75, 0.75) -- (1.5,0.75);

	\draw[thick, dashed] (3.5,2) -- (3.5,0) -- (5, 0) -- (5,2);

	\draw[fill=black] (5, 1.75) circle (0.05);

	\node[right] at (5, 1.75) {$1$};

	\draw[fill=black] (3.5, 1.25) circle (0.05);

	\node[left] at (3.5, 1.25) {$2$};

	\draw[fill=black] (5, 0.75) circle (0.05);

	\node[right] at (5, 0.75) {$3$};

	\draw[fill=black] (3.5, 0.25) circle (0.05);

	\node[left] at (3.5, 0.25) {$4$};

        \draw[thick] (3.5, 1.25) -- (4.25, 1.25) -- (4.25, 0.75) -- (5,0.75);

        \draw[thick] (5, 1.75) -- (4.25, 1.75) -- (4.25, 1.25);

        \draw[thick, dashed] (7,2) -- (7,0) -- (8.5, 0) -- (8.5,2);

	\draw[fill=black] (8.5, 1.75) circle (0.05);

	\node[right] at (8.5, 1.75) {$1$};

	\draw[fill=black] (7, 1.25) circle (0.05);

	\node[left] at (7, 1.25) {$2$};

	\draw[fill=black] (8.5, 0.75) circle (0.05);

	\node[right] at (8.5, 0.75) {$3$};

	\draw[fill=black] (7, 0.25) circle (0.05);

	\node[left] at (7, 0.25) {$4$};

        \draw[thick] (7, 1.25) -- (7.75, 1.25) -- (7.75, 0.75) -- (8.5,0.75);

        \draw[thick] (7, 0.25) -- (7.75, 0.25) -- (7.75, 0.75);

        \draw[thick, dashed] (10.5,2) -- (10.5,0) -- (12, 0) -- (12,2);

	\draw[fill=black] (12, 1.75) circle (0.05);

	\node[right] at (12, 1.75) {$1$};

	\draw[fill=black] (10.5, 1.25) circle (0.05);

	\node[left] at (10.5, 1.25) {$2$};

	\draw[fill=black] (12, 0.75) circle (0.05);

	\node[right] at (12, 0.75) {$3$};

	\draw[fill=black] (10.5, 0.25) circle (0.05);

	\node[left] at (10.5, 0.25) {$4$};

        \draw[thick] (10.5, 1.25) -- (11.25, 1.25) -- (11.25, 0.75) -- (12,0.75);

        \draw[thick] (10.5, 0.25) -- (11.25, 0.25) -- (11.25, 0.75);

        \draw[thick] (12, 1.75) -- (11.25, 1.75) -- (11.25, 1.25);

	\end{tikzpicture}

\end{center}

These boolean pairs (i.e. pairs of numbers that replaced a `$b$' colouring) act as ``walls" so that we cannot connect nodes on opposite sides of a boolean pair without including the boolean pair in the block.
    
\end{example}

\begin{remark} \label{sublatticecumulantsremark}
    Let us notice a couple easy facts about our set of lattices. Firstly note that if $\hat{\chi}^{-1}(\{b\}) = \emptyset$, then its $\ell$-$r$-replacement $\chi = \hat{\chi}$, and in particular,
    \[ BNC_{\text{ffb}}(\hat{\chi}) = BNC(\chi).\]

    Furthermore, for any $\hat{\chi} : \{1, \ldots, n'\} \to \{ \ell, r, b\}$, if $\chi : \{1, \ldots, n\} \to \{ \ell, r\}$ is its $\ell$-$r$-replacement where $n = n' + |\hat{\chi}^{-1}(\{b\})|$ and $f_{\hat{\chi}}(i) = i + |\hat{\chi}^{-1}(\{b\}) \cap [1, i-1]|$ then $BNC_{\text{ffb}}(\hat{\chi})$ is an interval of the lattice $BNC(\chi)$ with maximum element $1_{BNC(\chi)}$ and minimum element $0_{BNC_{\text{ffb}}(\hat{\chi})}$ defined by
    \[ 0_{BNC_{\text{ffb}}(\hat{\chi})} = \{ \{i\}, \{j, j+1\} : i \in f_{\hat{\chi}}(\{1, \ldots, n'\} \setminus \hat{\chi}^{-1}(\{b\})), j \in f_{\hat{\chi}}(\hat{\chi}^{-1}(\{b\})\} \] 
    This means that $\mu_{BNC}$ can be restricted to $BNC_{\text{ffb}}$. That is, if $\pi, \sigma \in BNC_{\text{ffb}}(\hat{\chi})$ then 
    \[ \sum_{\substack{\tau \in BNC_{\text{ffb}}(\hat{\chi}) \\ \pi \leq \tau \leq \sigma}} \mu_{BNC}(\tau, \sigma) = \sum_{\substack{\tau \in BNC_{\text{ffb}}(\hat{\chi}) \\ \pi \leq \tau \leq \sigma}} \mu_{BNC}(\pi, \tau) = \begin{cases} 1 & \text{ if } \pi = \sigma \\ 0 &\text{ otherwise} \end{cases}.\]

    So the M\"obius function on $BNC(\chi)$ acts as a M\"obius function on $BNC_{\text{ffb}}(\hat{\chi})$ when restricted, giving us a M\"obius inversion therein.
\end{remark}

Now we aim to show that we can indeed use the bi-free cumulants as free-free-Boolean cumulants. To do so, we look at the moments if a bi-free ffb $B$-system, with our bi-non-crossing partitions above in mind.

\begin{theorem} \label{vanishingffbpartitions}
    Let $\{(A_k^\ell, A_k^r, C_k', D_k')\}$ be a bi-free ffb $B$-system in a $B$-$B$-non-commutative probability space $(\A,E_\A,\varepsilon)$. Fix
    $\hat{\upchi}:\{1,\ldots, n'\} \to \{\ell, r, b\}$. Let $\chi = \{1, \ldots, n\} \to \{\ell, r\}$ be the $\ell$-$r$-replacement of $\hat{\chi}$, where $n = n' + |\hat{\chi}^{-1}(\{b\})|$. For each $i \in \{1, \ldots, n'\}$ define $f_{\hat{\chi}}(i) = i + |\hat{\chi}^{-1}(b) \cap [1,i-1]|$. Let $\epsilon : \{1, \ldots, n\} \to K$ be such that $\epsilon(i) = \epsilon(i+1)$ whenever $i \in f_{\hat{\chi}}(\hat{\chi}^{-1}(\{b\}))$. Now let 
    \[ Z_i \in \begin{cases} \A_{\epsilon(i)}^{\chi(i)} &\text{ if } i \in f_{\hat{\chi}}(\{1, \ldots, n\} \setminus \hat{\chi}^{-1}(\{b\})) \\
    C_{\epsilon(i)}' &\text{ if } i \in f_{\hat{\chi}}(\hat{\chi}^{-1}(\{b\}) \\
    D_{\epsilon(i)}' &\text{ otherwise}\end{cases}.\]

    If $\pi \in BNC(\chi)$ and $\pi \leq \epsilon$, then
    \[ \E_\pi(Z_1, \ldots, Z_n ) =0 \]
    unless $\pi \in BNC_{\text{ffb}}(\hat{\chi})$.
\end{theorem}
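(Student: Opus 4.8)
The plan is to evaluate $\E_\pi = E_\pi$ through its defining recursion (Definition \ref{bimultiplicative}) and to show that the hypothesis $\pi\notin BNC_{\text{ffb}}(\hat\chi)$ forces one of the single-block moments produced by that recursion to vanish on account of conditions (1)--(3) of Definition \ref{ffbsystem}. The argument splits into a bookkeeping reduction, a block-vanishing criterion, and a purely combinatorial lemma, the last of which is the real content.

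First I would record the reduction. Since $\pi\le\epsilon$, every block of $\pi$ is monochromatic, and the recursion of Definition \ref{bimultiplicative} evaluates $\E_\pi(Z_1,\dots,Z_n)$ as a nested composition of single-block moments $E_\A(w_V)$, one for each block $V$ of colour $k=k_V$, where $w_V$ is the product of the $Z_i$, $i\in V$, taken in increasing order, with scalars $L_\beta,R_\beta\in\varepsilon(B\otimes B^{\mathrm{op}})\subseteq\bA_k$ folded in from the inner blocks. Thus each $w_V$ has the form $a_0X_1a_1\cdots X_sa_s$ with $a_i\in\bA_k$ and $X_1,\dots,X_s$ the $C_k'$/$D_k'$ letters of $V$ listed in increasing position; the folded scalars change none of these letters. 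The key point is that if a single one of these block moments equals $0\in B$, then the whole expression vanishes: that $0$ is folded into the next stage as $L_0=R_0=0$ and thereafter acts as an annihilating factor, so it propagates to the final value.

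Second I would prove the block-vanishing criterion: $E_\A(w_V)=0$ unless the pattern $X_1\cdots X_s$ is \emph{balanced alternating}, i.e.\ of the form $(C_k'D_k')^t$ or $(D_k'C_k')^t$. Indeed, if two consecutive letters share a type then $w_V$ contains $C_k'\bA_kC_k'$ or $D_k'\bA_kD_k'$, which is $\{0\}$ by condition (1); and if the pattern alternates but begins and ends with the same type (hence has odd length), then, using $1_\A\in\bA_k$ to supply any missing $\bA_k$-entries, $w_V$ is exactly a word of the kind in condition (2) or (3), so $E_\A(w_V)=0$. Combined with the first step, it remains only to manufacture, from the failure of the pair condition, a single block whose pattern is not balanced alternating.

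The heart of the matter, and the step I expect to be the main obstacle, is the combinatorial claim: \emph{if $\pi\in BNC(\chi)$, $\pi\le\epsilon$, and some boolean pair is split by $\pi$, then some block of $\pi$ is not balanced alternating}; equivalently, if every block is balanced alternating then $\pi\ge 0_{BNC_{\text{ffb}}(\hat\chi)}$ (Remark \ref{sublatticecumulantsremark}), i.e.\ $\pi\in BNC_{\text{ffb}}(\hat\chi)$. I would prove the equivalent statement by induction on $N=|\hat\chi^{-1}(\{b\})|$, assuming every block balanced alternating. Examine the \emph{last} pair, at positions $(p_N,p_N+1)$, whose letters occupy the two largest $C'$/$D'$ positions. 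If it were split, then the $D_k'$-letter at $p_N+1$ is the final letter of its block, so balance forces a $C_k'$-letter of that block at a position $<p_N$ (a block consisting of this lone $D_k'$-letter would be unbalanced, against our assumption); symmetrically the $C_k'$-letter at $p_N$ is the final letter of its block, forcing a $D_k'$-letter there at a position $<p_N$. Reordering these four left/right nodes by $\prec_\chi$ (lefts increasing, then rights decreasing) lists them as $C',C_N',D_N',D'$, with the two distinct blocks occupying the positions $\{1,3\}$ and $\{2,4\}$ --- precisely a $\prec_\chi$-crossing, contradicting $\pi\in BNC(\chi)$. Hence the last pair is not split; it then sits at the tail of its block as $\cdots C_k'D_k'$, and deleting its two nodes leaves a bi-non-crossing partition with $N-1$ pairs, each block still balanced alternating, so the induction closes. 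The delicate points are checking that choosing the \emph{last} pair yields a genuine crossing (a carelessly chosen pair produces only a harmless nesting) and that deletion preserves both bi-non-crossingness and the balanced-alternating form of every block. With the claim in hand, the theorem follows by contraposition together with the first two steps.
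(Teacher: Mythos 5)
Your proof is correct, and it reorganizes the argument in a genuinely different way from the paper, even though both rest on the same two pillars: the bi-multiplicativity reduction to monochromatic block moments (with the folded-in scalars lying in $\varepsilon(B\otimes B^{\mathrm{op}})\subseteq\bA_k$, so that conditions (1)--(3) of Definition \ref{ffbsystem} apply to the words the recursion actually produces, and one vanishing block kills the whole expression), and those three conditions as the only source of vanishing. The paper interleaves the algebra and the combinatorics: it runs a \emph{forward} induction over the pairs $i_1<\cdots<i_m$, and for a split pair it cases on whether the block containing $i_j$ has a right node below $i_j$, using bi-non-crossingness to show that one of the two blocks of the pair contains no $C'$-letters (respectively, the relevant restricted configuration of letters), then invoking whichever of (1), (2), (3) matches that configuration. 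You decouple these into two clean lemmas: a complete block-vanishing criterion (a block can survive only if its boolean pattern is $(C'D')^t$ or $(D'C')^t$, with (1) handling repeated types and (2)/(3) handling odd-length alternations), and a purely combinatorial statement (bi-non-crossing with all blocks balanced forces every pair into a single block), proved by \emph{backward} induction peeling off the \emph{last} pair via an explicit four-node crossing in the $\prec_\chi$ order. Your observation that only the last pair yields a genuine crossing --- an arbitrarily chosen split pair can produce a harmless nesting --- is precisely the point where a careless version of this argument would fail, and your deletion step does preserve both bi-non-crossingness and the balanced form of every block, so the induction closes. What each approach buys: yours is more modular, makes the single use of bi-non-crossingness transparent, and yields a reusable characterization of the non-vanishing blocks; the paper's interleaved case analysis is longer but records more explicitly which of hypotheses (1)--(3) is responsible for the vanishing in each geometric configuration.
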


\begin{proof}

    Our goal amounts to showing that if $\pi$ is a partition which splits up a boolean pair, then the corresponding $\pi$-moment must be zero. 

    Fix $\pi \in BNC(\chi)$. Consider the set $f_{\hat{\chi}}(\hat{\chi}^{-1}(\{b\})) = \{ i_1 < i_2 < \cdots < i_m \}$. We argue that whenever $i_j \in V \in \pi$, then either $i_j + 1 \in V$, or the corresponding $\pi$-moment is zero. We proceed inductively.

    For the base case we show that if $V \in \pi$ with $i_1 \in V$, then $i_1 + 1 \in V$, otherwise $\E_\pi(Z_1, \ldots, Z_n) = 0$. Indeed, suppose that there is some block $W \in \pi$ with $W \neq V$ such that $i_1 + 1 \in W$. We consider some cases. 

    \underline{Case 1: There is some $j > i_1$ such that $\chi(j) = r$ and $j \in V$.}
    If this is the case $\chi^{-1}(\{\ell\}) \cap [i_1, n] \cap W = \emptyset$. So since $\pi \in BNC(\chi)$, $W$ does not contain any $i_\alpha$ with $1 \leq \alpha \leq m$. 

    \begin{center}

    \begin{tikzpicture}[baseline]

	\draw[thick, dashed] (0,3) -- (0,-0.5) -- (3, -0.5) -- (3,3);

	\draw[fill=black] (0, 2) circle (0.08);

	\node[left] at (0, 2) {$i_1$};

	\draw[fill=black] (3, 1.5) circle (0.08);

	\node[right] at (3, 1.5) {$i_1 + 1$};

	\draw[fill=black] (3, 0.5) circle (0.08);

        \draw[fill=black] (0, 1) circle (0.08);

	\node[right] at (3, 0.5) {$j$};

        \node[left] at (1, 1.25) {$V$};

        \draw[thick, dashed] (1, 0.5) -- (1, 0);

        \draw[thick, dashed] (1, 2) -- (1, 2.5);

        \draw[thick] (0, 2) -- (1, 2) -- (1, 0.5) -- (3,0.5);

        \draw[thick] (3, 1.5) -- (2, 1.5);

        \draw[thick, dashed] (2, 1.5) -- (2, 2);

        \draw[thick, dashed] (2, 1.5) -- (2, 1) -- (2.25,1);

        \node[above] at (2.5, 1.5) {$W$}; 

        \node[right] at (3, 1.1) {$\vdots$};

        \node[left] at (0, 1.5) {$\vdots$};

        \node[below] at (1.5, -0.5) {$\pi$};

        \node[left] at (0, 1) {$i_\alpha$};
	\end{tikzpicture}

\end{center}
    
    So the ordered product $\prod_{i \in W} Z_i$ 
    is an alternating product of elements from $\bA_{\epsilon(W)} = \text{alg}(A_{\epsilon(W)}^\ell, A_{\epsilon(W)}^r)$ and elements of $D_{\epsilon(W)}'$. In fact, since $A_{\epsilon(W)}^\ell$ and $A_{\epsilon(W)}^r$ are $B$-faces, we can add left and right multiplications by $B$ without changing this fact. In particular, for any $b_{1,1}, b_{1,2}, b_{2,1}, \ldots, b_{n,1},b_{n,2}, b_{n+1} \in B$, we have that the ordered product
    \[\left(\prod_{i \in W} L_{b_{i,1}}R_{b_{i,2}}Z_i\right) \cdot L_{b_{n+1}} \]
    is still an alternating product of elements from $\bA_{\epsilon(W)}$ and $D_{\epsilon(W)}'$.
    Thus by property (3) in the definition of bi-free ffb $B$-systems, we have  
    \[ \E_{\pi|_W}((L_{b_{1,1}}R_{b_{1,2}}Z_1, L_{b_{2,1}}R_{b_{2,2}}Z_2, \ldots, L_{b_{n,1}} R_{b_{n,2}}Z_nL_{b_{n+1}})|_W) = 0.\]
    Now by the reduction process in Definition \ref{bimultiplicative} (of bi-multiplicative functions), there will be a stage where we must evaluate an expectation of the form above, where the elements $b_{1,1}, \ldots, b_{2,n}, b_{n+1}$ act as potential placeholders for other expectations of blocks. So from above, it follows from bi-multiplicativity, that
    \[ \E_\pi(Z_1, \ldots, Z_n) = 0.\]

    \underline{Case 2: There are no elements $j > i_1$ with $\chi(j)=r$ and $j \in V$.}

    If this is the case, since $\pi \in BNC(\chi)$, $V$ will not contain any elements $i_\alpha + 1$ for any $1 \leq \alpha \leq m$. 

    \begin{center}

    \begin{tikzpicture}[baseline]

	\draw[thick, dashed] (0,3) -- (0,-0.5) -- (3, -0.5) -- (3,3);

	\draw[fill=black] (0, 2) circle (0.08);

	\node[left] at (0, 2) {$i_1$};

	\draw[fill=black] (3, 1.5) circle (0.08);

	\node[right] at (3, 1.5) {$i_1 + 1$};

	\draw[fill=black] (3, 0.5) circle (0.08);

        \draw[fill=black] (0, 1) circle (0.08);

	\node[right] at (3, 0.5) {$i_\alpha + 1$};

        \node[below] at (0.75, 2) {$V$};

        \draw[thick, dashed] (1.5, 2.5) -- (1.5, 0.5) -- (1, 0.5);

        \draw[thick] (0, 2) -- (1.5, 2);

        \node[right] at (3, 1.1) {$\vdots$};

        \node[left] at (0, 1.5) {$\vdots$};

        \node[below] at (1.5, -0.5) {$\pi$};

        \node[left] at (0, 1) {$i_\alpha$};
	\end{tikzpicture}

\end{center}

    So similar to above, the ordered product of elements indexed by $V$ is an alternating product of elements from $\bA_{\epsilon(V)}$ and elements of $C_{\epsilon(V)}'$. The same is true if we include left and right multiplications by elements of $B$. That is, for any $b_{1,1}, b_{1,2}, b_{2,1}, \ldots, b_{n,1},b_{n,2}, b_{n+1} \in B$, we have that the ordered product
    \[ \left( \prod_{i \in V} L_{b_{i,1}} R_{b_{i,2}} Z_i \right) \cdot L_{b_{n+1}}\]
    is an alternating product of elements from $\bA_{\epsilon(V)}$ and elements of $C_{\epsilon(V)}'$. Thus by property (2) of the definition of bi-free ffb $B$-systems, we have 
    \[ \E_{\pi|_V}((L_{b_{1,1}}R_{b_{1,2}}Z_1, L_{b_{2,1}}R_{b_{2,2}}Z_2, \ldots, L_{b_{n,1}} R_{b_{n,2}}Z_nL_{b_{n+1}})|_V) = 0.\]
    Bi-multiplicativity then implies 
    \[ \E_\pi(Z_1, \ldots, Z_n) = 0.\]

    $\ $

    Now suppose that for $k \geq 1$, we have that whenever $i_j \in V \in \pi$, it follows that $i_j + 1 \in V$, for all $1 \leq j \leq k$. Let $V \in \pi$ such that $i_{k+1} \in V$. Suppose that $i_{k+1} +1 \in W \neq V$. We consider cases similarly as above. 

    Firstly, if there is some $\alpha > i_{k+1}$ such that $\chi(\a) = r$ and $\a \in V$, then $W \cap \chi^{-1}(\{\ell\}) \cap [i_{k+1}+1,n] = \emptyset$, since $\pi \in BNC(\chi)$. By the hypothesis above we have that for any $1 \leq j \leq k$, $i_j \in W$ if and only if $i_j + 1 \in W$. This implies, since $\pi$ is bi-non-crossing, that if such a $j$ exists, then in fact $i_k, i_k + 1 \in W$. If this is the case, then letting 
    \[ A_1 = \prod_{i \in W \cap [i_k + 2, i_{k+1}]}Z_i \in \bA_{\epsilon(W)}  \quad \text{ and } \quad A_2 = \prod_{i \in W \cap [i_{k+1}+2, n]} Z_i \]
    where the products are ordered. So property (1) of the definition of bi-free ffb $B$-system says that $Z_{i_k+1}A_1Z_{i_{k+1}+1} = 0$, yielding 
    \[ \prod_{i \in W} Z_i = \left( \prod_{i \in W \cap [1, i_k]} Z_i \right) \cdot Z_{i_k+1} A_1 Z_{i_{k+1}+1}A_2 = 0, \]
    hence the corresponding $\pi|_W$ moment is zero. If instead $i_k \notin W$, then property (3) guarantees that 
    \[ \E_{\pi|_W}((Z_1, \ldots, Z_n)|_W) = 0.\]
    As above, we can upgrade this with left and right multiplications by elements of $B$ to make use of bi-multiplicativity granting 
    \[ \E_\pi(Z_1, \ldots, Z_n) = 0. \]

    Otherwise, if there is no such $\a > i_{k+1}$ such that $\chi(\a) = r$ and $\a \in V$, then $V \cap \chi^{-1}(\{r\}) \cap [i_{k+1}, n] = \emptyset$. Let $j > k$ be the smallest natural number such that $i_j \in V$, if it exists. Define the ordered products
    \[ A_1 = \prod_{i \in V \cap [i_{k+1}+1, i_j -1]} Z_i \in \bA_{\epsilon(V)} \quad \text{ and } \quad A_2 = \prod_{i \in V \cap [i_j +1, n]} Z_i, \]
    then property (1) again says that $Z_{i_{k+1}}A_1Z_{i_j} = 0$, so that the ordered product
    \[ \prod_{i \in V}Z_i = \left( \prod_{i \in V \cap [1, i_{k+1}-1]} Z_i\right)Z_{i_{k+1}} A_1 Z_{i_j}A_2 = 0,\]
    so the corresponding $\pi|_V$ moment is zero. If no such $j$ exists, then by hypothesis, for any $1 \leq j \leq k$,  $i_j \in V$ if and only if $i_j + 1 \in V$. This implies that 
    \[ \E_{\pi|_V}((Z_1, \ldots, Z_n)|_V) = 0\]
    as this is a moment of the form of property (2) in the definition of bi-free ffb $B$-system. We may upgrade this observation with left and right multiplications by elements of $B$ and use bi-multiplicativity to get that
    \[ \E_\pi(Z_1, \ldots, Z_n) = 0. \qedhere \]
\end{proof}

This result grants us use of the bi-free cumulants to characterize the free-free-Boolean independence as follows. Note the analog between this result and Theorem \ref{combinatorialbifree}.

\begin{theorem}
    Let $\{(A_k^\ell, A_k^r, C_k', D_k')\}_{k \in K}$ be a bi-free ffb B-system in a $B$-$B$-non-commutative probability space $(\A, E_\A, \varepsilon)$. Let $\hat{\chi} : \{1, \ldots, n'\} \to \{ \ell, r, b\}$, and define $f_{\hat{\chi}}(i) = i + |\hat{\chi}^{-1}(\{b\})\cap[0, i-1]|$. Let $\chi : \{ 1, \ldots, n\}, \to \{\ell, r\}$ be the $\ell$-$r$-replacement of $\hat{\chi}$, where $n = n'  + |\hat{\chi}^{-1}(\{b\})|$. Let $\epsilon : \{ 1, \ldots, n\} \to K$ be such that $\epsilon(i) = \epsilon(i+1)$ for all $i \in f(\hat{\chi}^{-1}(\{b\}))$. For each $i \in \{1, \ldots, n\}$ if 
    \[ Z_i \in \begin{cases} A_{\epsilon(i)}^{\chi(i)} & \text{ if } i \in f_{\hat{\chi}}(\{1, \ldots, n'\} \cap \hat{\chi}^{-1}(\{b\})) \\
    C_k' &\text{ if } i \in f_{\hat{\chi}}(\hat{\chi}^{-1}(\{b\})) \\
    D_k' &\text{ otherwise} \end{cases},\]
    then 
    \[ E(Z_1 Z_2 \cdots Z_n) = \sum_{\pi \in BNC_{\text{ffb}}(\hat{\chi})}\kappa_\pi(Z_1, \ldots, Z_n).\]
    Equivalently,
    \[ \kappa_{1_\chi}(Z_1, \ldots, Z_n) = \sum_{\pi \in BNC_{\text{ffb}}(\hat{\chi})} \E_\pi(Z_1, \ldots, Z_n)\mu_{BNC}(\pi, 1_{\chi}).\]
    Lastly, $\kappa_{1_\chi}(Z_1, \ldots, Z_n) = 0$ unless $\epsilon$ is constant.
\end{theorem}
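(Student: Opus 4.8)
The plan is to reduce all three assertions to the single claim that the bi-free cumulant $\kappa_\pi(Z_1,\ldots,Z_n)$ vanishes for every $\pi \in BNC(\chi)\setminus BNC_{\text{ffb}}(\hat\chi)$; once this is in hand, the displayed identities and the final vanishing statement follow from routine M\"obius manipulations. I would first record that $\E_{1_\chi}(Z_1,\ldots,Z_n) = E(Z_1\cdots Z_n)$ and that M\"obius inversion of the definition of $\kappa$ yields the moment-cumulant relation $\E_{1_\chi}(Z_1,\ldots,Z_n) = \sum_{\pi \in BNC(\chi)} \kappa_\pi(Z_1,\ldots,Z_n)$. By Remark \ref{sublatticecumulantsremark}, $BNC_{\text{ffb}}(\hat\chi)$ is the interval $[0_{\text{ffb}}, 1_\chi]$, where $0_{\text{ffb}} = 0_{BNC_{\text{ffb}}(\hat\chi)}$, so the condition $\pi \notin BNC_{\text{ffb}}(\hat\chi)$ is the same as $\pi \not\geq 0_{\text{ffb}}$.

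To prove the key claim I would split into two cases. First, every $Z_i$ lies in one of the two bi-free algebras, since $A_k^\ell, C_k' \subseteq \text{alg}(A_k^\ell, C_k)$ and $A_k^r, D_k' \subseteq \text{alg}(A_k^r, D_k)$ (the boolean-pair variables $C_k', D_k'$ play the roles of the left and right letters that replaced a $b$). Hence if $\pi \not\leq \epsilon$, the vanishing of mixed bi-free cumulants --- the multi-block form of Theorem \ref{combinatorialbifree}, a consequence of the bi-multiplicativity of $\kappa$ --- forces $\kappa_\pi = 0$. If instead $\pi \leq \epsilon$ but $\pi \not\geq 0_{\text{ffb}}$, I would expand $\kappa_\pi(Z_1,\ldots,Z_n) = \sum_{\sigma \leq \pi} \E_\sigma(Z_1,\ldots,Z_n)\,\mu_{BNC}(\sigma,\pi)$; every index satisfies $\sigma \leq \pi \leq \epsilon$, so Theorem \ref{vanishingffbpartitions} applies and annihilates each term with $\sigma \notin BNC_{\text{ffb}}(\hat\chi)$. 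A surviving index would have to obey $0_{\text{ffb}} \leq \sigma \leq \pi$, contradicting $\pi \not\geq 0_{\text{ffb}}$; thus the sum is empty and $\kappa_\pi = 0$.

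The first identity follows at once, since only the $\pi \in BNC_{\text{ffb}}(\hat\chi)$ survive in $\E_{1_\chi} = \sum_{\pi \in BNC(\chi)} \kappa_\pi$. For the equivalent cumulant formula I would use the restricted M\"obius inversion on $BNC_{\text{ffb}}(\hat\chi)$ licensed by Remark \ref{sublatticecumulantsremark}: substituting $\E_\sigma = \sum_{\tau \leq \sigma}\kappa_\tau$ into $\sum_{\sigma \in BNC_{\text{ffb}}(\hat\chi)} \E_\sigma\, \mu_{BNC}(\sigma, 1_\chi)$, discarding the $\kappa_\tau$ with $\tau \notin BNC_{\text{ffb}}(\hat\chi)$ by the key claim, interchanging the order of summation, and applying the defining property of the M\"obius function on the interval $[0_{\text{ffb}}, 1_\chi]$, collapses the expression to $\kappa_{1_\chi}(Z_1,\ldots,Z_n)$. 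Finally, the last sentence is the most direct consequence: $1_\chi \leq \epsilon$ holds precisely when $\epsilon$ is constant, so for non-constant $\epsilon$ we have $1_\chi \not\leq \epsilon$ and Theorem \ref{combinatorialbifree} gives $\kappa_{1_\chi} = 0$.

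The step I expect to be the main obstacle is the case $\pi \not\leq \epsilon$ of the key claim, which lies outside the reach of Theorem \ref{vanishingffbpartitions} and must instead be handled by the full multi-block vanishing of mixed bi-free cumulants; care is needed to confirm that every letter, including the boolean-pair variables drawn from $C_k'$ and $D_k'$, is correctly assigned to its left or right bi-free algebra so that this vanishing is applicable. The combinatorial bookkeeping in the equivalence --- that the ambient $\mu_{BNC}$ serves as the M\"obius function of the sub-interval --- is exactly what Remark \ref{sublatticecumulantsremark} provides, so that portion should be a formality.
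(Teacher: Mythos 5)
Your proposal is correct, and it reaches all three assertions, but it is organized around a different pivot than the paper's proof. You first isolate the lemma that $\kappa_\pi(Z_1,\ldots,Z_n)=0$ for \emph{every} $\pi \in BNC(\chi)\setminus BNC_{\text{ffb}}(\hat\chi)$ --- split into the case $\pi \not\leq \epsilon$ (multi-block vanishing of mixed bi-free cumulants, via bi-multiplicativity of $\kappa$) and the case $\pi \leq \epsilon$, $\pi \not\geq 0_{BNC_{\text{ffb}}(\hat\chi)}$ (where Theorem \ref{vanishingffbpartitions} kills every term $\E_\sigma\mu_{BNC}(\sigma,\pi)$, since a surviving $\sigma$ would force $\pi \geq 0_{BNC_{\text{ffb}}(\hat\chi)}$) --- and then everything follows by soft M\"obius manipulations. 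The paper never states this cumulant-level lemma: it instead runs one chain of equalities on the double sum supplied by Theorem \ref{combinatorialbifree}, staying at the level of the moment terms $\E_\pi$ throughout, applying Theorem \ref{vanishingffbpartitions} twice (once to shrink the outer sum to $BNC_{\text{ffb}}(\hat\chi)$, once to re-expand the inner sum to all of $BNC(\chi)$), using Remark \ref{sublatticecumulantsremark} in between, and only recognizing cumulants in the last line. The trade-off is instructive: the paper's route is shorter, but its final equality silently drops the constraint $\sigma \leq \epsilon$, which needs exactly the multi-block vanishing $\kappa_\sigma = 0$ for $\sigma \not\leq \epsilon$ that you make explicit; and the paper's ``Equivalently'' clause is asserted without argument, whereas your route actually proves it (substituting $\E_\sigma = \sum_{\tau \leq \sigma}\kappa_\tau$, discarding non-ffb $\tau$ by your lemma, and collapsing via the restricted M\"obius property). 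So your version is slightly longer but more self-contained; do note that the case $\pi \not\leq \epsilon$ leans on the bi-multiplicativity of the cumulant function from \cite{CNS15}, which goes a bit beyond the literal statement of Theorem \ref{combinatorialbifree} quoted in the paper --- a dependency the paper shares but does not acknowledge either.
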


\begin{proof}
    Using the fact that the pairs $\{(\text{alg}(A_k^\ell,C_k'),\text{alg}(A_k^r,D_k'))\}_{k \in K}$ are bi-free and Theorem \ref{combinatorialbifree},
    \begin{align*}
        E_\A(Z_1 Z_2 \cdots Z_n) &= \sum_{\pi \in BNC(\chi)} \sum_{ \substack{\sigma \in BNC(\chi) \\\pi \leq \sigma \leq \epsilon}} \mu_{BNC}(\pi, \sigma) \E_\pi(Z_1, Z_2, \ldots, Z_n) \\
        &= \sum_{\pi \in BNC_{\text{ffb}}(\chi)} \sum_{\substack{\sigma \in BNC(\chi)\\\pi \leq \sigma \leq \epsilon}} \mu_{BNC}(\pi, \sigma) \E_\pi(Z_1, Z_2, \ldots, Z_n) &\text{(Theorem \ref{vanishingffbpartitions})} \\
        &= \sum_{\pi \in BNC_{\text{ffb}}(\chi)} \sum_{\substack{\sigma \in BNC_{\text{ffb}}(\chi)\\\pi \leq \sigma \leq \epsilon}} \mu_{BNC}(\pi, \sigma) \E_\pi(Z_1, Z_2, \ldots, Z_n) &\text{(Remark \ref{sublatticecumulantsremark})} \\
        &= \sum_{\substack{\sigma \in BNC_{\text{ffb}}(\chi) \\ \sigma \leq \epsilon}} \sum_{\substack{\pi \in BNC_{\text{ffb}}(\chi)\\\pi \leq \sigma}} \mu_{BNC}(\pi, \sigma) \E_\pi(Z_1, Z_2, \ldots, Z_n) \\
        &=\sum_{\substack{\sigma \in BNC_{\text{ffb}}(\chi) \\ \sigma \leq \epsilon}} \sum_{\substack{\pi \in BNC(\chi)\\\pi \leq \sigma}} \mu_{BNC}(\pi, \sigma) \E_\pi(Z_1, Z_2, \ldots, Z_n) &\text{(Theorem \ref{vanishingffbpartitions})}\\
        &= \sum_{\sigma \in BNC_{\text{ffb}}(\chi)}\kappa_{\sigma}(Z_1, Z_2, \ldots, Z_n).
    \end{align*}

    The last claim follows again from the fact that the pairs $\{(\text{alg}(A_k^\ell, C_k'), \text{alg}(A_k^r,D_k'))\}_{k \in K}$ are bi-free.
\end{proof}

So these special bi-free cumulants will act as free-free-Boolean cumulants once we can show that our bi-free ffb $B$-systems give rise to free-free-Boolean independence, which we will do next.

\subsection{Independence of our abstract structures}

We saw in section 5.2 a construction that embeds any free-free-Boolean family into a bi-free ffb $B$-system, but there was a lingering question about whether the joint distribution was preserved. With the combinatorics tools that we have just developed, we can now show that bi-free ffb $B$-systems do in fact build free-free-Boolean independent families with amalgamation over $B$, thus ensuring that our embedding preserves joint distribution.

\begin{theorem} \label{abstractstructuresareffb}
    If $\{ (A_k^\ell, A_k^r, C_k', D_k')\}_{k \in K}$ is a bi-free ffb $B$-system in a $B$-$B$-non-commutative probability space $(\A, E_\A, \varepsilon)$ then the family $\Gamma = \{(A_k^\ell, A_k^r, \text{alg}(C_k'D_k'))\}_{k \in K}$ is free-free-Boolean independent with amalgamation over $B$.
\end{theorem}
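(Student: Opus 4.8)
The plan is to verify Definition \ref{ffbdef} directly: I would exhibit the canonical free-free-Boolean representation attached to the individual triples and show it reproduces the joint distribution of $\Gamma$. For each $k \in K$, take a $B$-$B$-bimodule with specified $B$-projection $(\X_k, \osc{\X}_k, p_k)$ together with homomorphisms $\ell_k : A_k^\ell \to \L_\ell(\X_k)$, $r_k : A_k^r \to \L_r(\X_k)$, and $m_k : \text{alg}(C_k'D_k') \to \L(\X_k)$ that faithfully carry the joint distribution of the single triple $(A_k^\ell, A_k^r, \text{alg}(C_k'D_k'))$; such a bimodule is supplied by Theorem \ref{faithfulbimodule} applied to the $B$-$B$-non-commutative probability space generated by that triple. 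Forming the reduced free product $\X$ with left and right regular representations $\lambda_k, \rho_k$ and Boolean projections $P_k$, and since equality of joint distributions is equality of all moments, the entire claim reduces to proving
\[ E_\A(Z_1 \cdots Z_n) = E_{\L(\X)}\big( \mu_1(Z_1) \cdots \mu_n(Z_n)\big) \]
for every word, where $\mu_i(Z_i)$ equals $\lambda_{\epsilon(i)}(\ell_{\epsilon(i)}(Z_i))$, $\rho_{\epsilon(i)}(r_{\epsilon(i)}(Z_i))$, or $P_{\epsilon(i)}\lambda_{\epsilon(i)}(m_{\epsilon(i)}(Z_i))P_{\epsilon(i)}$ according as $\hat{\chi}(i)$ is $\ell$, $r$, or $b$.

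To evaluate the right-hand side, I would first apply Lemma \ref{boolprojfacts} to collapse each sandwiched boolean factor $P_k \lambda_k(\cdot) P_k$ to the one-sided operator $\lambda_k(\cdot)P_k$, turning the word into an ordinary product of left and right regular operators punctuated by Boolean projections at the boolean positions $\{i_1 < \cdots < i_m\}$. Lemma \ref{keyprojLRlemma} then rewrites this projected product as the unprojected (purely bi-free) product plus a sum of diagram vectors $E_D$ indexed by $D \in S \subseteq \bigcup_{j} S_{i_j}^\chi$, with each $S_{i_j}$ consisting of diagrams carrying a spine of the wrong colour to the top gap. Applying the projection $p$ and invoking Remark \ref{wherearetheLRpieces}, every correction vector lying in a nontrivial tensor product is annihilated, so only the diagrams with no block reaching the top gap contribute; these are exactly the configurations in which a spine that would have reached the top has instead been cut, which is the combinatorial signature of a block respecting the boolean groupings.

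The two sides are then matched through the cumulant calculus. On the algebra side, after expanding each boolean letter into its $C_k'$ and $D_k'$ constituents, the preceding theorem expresses $E_\A(Z_1 \cdots Z_n)$ as the sum of bi-free cumulants $\sum_{\pi \in BNC_{\text{ffb}}(\hat{\chi})} \kappa_\pi$, which by Theorem \ref{vanishingffbpartitions} is precisely the sum over those bi-non-crossing partitions that never split a boolean pair. I would show that the surviving diagram corrections on the representation side implement exactly this restriction: inserting the projections $P_{\epsilon(i)}$ removes from the full bi-non-crossing expansion all contributions whose partition block crosses a boolean wall, leaving the identical sum over $BNC_{\text{ffb}}(\hat{\chi})$. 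Because each $\X_k$ reproduces the single-triple moments, the individual cumulant values agree term by term, and the two expectations coincide.

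The step I expect to be the genuine obstacle is the bookkeeping in the middle: tracking how the diagram corrections produced by Lemma \ref{keyprojLRlemma} reorganize into partition-indexed pieces and confirming that, after applying $p$, they cancel exactly the partitions outside $BNC_{\text{ffb}}(\hat{\chi})$ rather than over- or under-counting. Equivalently, one must verify that the net effect of the Boolean projections at the boolean positions is precisely to replace the full bi-non-crossing sum by its restriction to $BNC_{\text{ffb}}(\hat{\chi})$, mirroring on the representation side the vanishing phenomenon that Theorem \ref{vanishingffbpartitions} establishes on the algebra side, with all remaining data faithfully supplied by the single-triple representations $\X_k$.
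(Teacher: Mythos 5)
Your skeleton is the paper's (reduce to a single moment identity, then Lemma \ref{boolprojfacts}, Lemma \ref{keyprojLRlemma}, Remark \ref{wherearetheLRpieces}, Theorem \ref{vanishingffbpartitions}), but there is a genuine gap exactly at the step you flag as ``bookkeeping,'' and it is not bookkeeping: you run the LR calculus with each boolean letter kept \emph{whole}, as a single left operator $\lambda_{\epsilon(i)}(m_{\epsilon(i)}(Z_i))$ flanked by projections. Two things then break. First, the ``unprojected (purely bi-free) product'' produced by Lemma \ref{keyprojLRlemma} is not a word in the faces of the bi-free family: the $D_{\epsilon(i)}'$ constituent of $Z_i$ is buried inside a left regular operator, whereas bi-freeness of $\{(\text{alg}(A_k^\ell,C_k'),\text{alg}(A_k^r,D_k'))\}_{k\in K}$ only evaluates words whose left-face letters are represented by $\lambda\circ\ell$ and whose right-face letters are represented by $\rho\circ r$. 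So nothing you have cited identifies its expectation with $E_\A(Z_1\cdots Z_n)$, and in fact they differ. Second, the correction terms do \emph{not} vanish, so they cannot be discarded; they must instead effect an exact cancellation that the cited lemmas do not provide. Concretely, take $B=\bC$, $n=3$, $\hat{\chi}=(\ell,b,\ell)$, colours $(k_1,k_2,k_1)$ with $k_1\neq k_2$, and $Z_2=S_CS_D$. A direct computation of the projected word gives the ffb moment $E(Z_1)E(Z_2)E(Z_3)$, while the unprojected word (boolean letter as one left operator) has expectation $E(Z_2)E(Z_1Z_3)$ by ordinary freeness of the left regular representations; the corrections therefore sum to $E(Z_2)\left(E(Z_1)E(Z_3)-E(Z_1Z_3)\right)$, which is nonzero whenever $E(Z_2)\neq 0$ and $Z_1,Z_3\in A_{k_1}^\ell$ are correlated. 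Theorem \ref{vanishingffbpartitions} cannot rescue this: it is a statement about \emph{split} letters, i.e.\ about entries from $C_k'$ and $D_k'$ occupying adjacent $\ell$- and $r$-positions, and it does not apply to partitions of whole boolean letters (the wall-crossing $\E_\pi$'s above are genuinely nonzero).

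The fix is the one structural move of the paper that your proposal omits: split each boolean letter $Z_i=S_{C,i}S_{D,i}$ into two letters \emph{on the representation side, before} invoking the LR calculus, representing $S_{C,i}$ as the left operator $\lambda_{\epsilon(f(i))}(\ell_{\epsilon(f(i))}(S_{C,i}))$ and $S_{D,i}$ as the \emph{right} operator $\rho_{\epsilon(f(i))}(r_{\epsilon(f(i))}(S_{D,i}))$, with a single projection $P_{\epsilon(f(i))}$ inserted in front of the $C$-piece only. Lemma \ref{boolprojfacts} (specifically $\lambda_k(T)P_k=\rho_k(T)P_k$ together with commutation) shows this projected split word equals your $P\lambda(m(Z_i))P$-word, so no information is lost. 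Then the unprojected split word \emph{is} a bona fide bi-free word, so its expectation equals $E_\A(T_1\cdots T_m)=E_\A(Z_1\cdots Z_n)$ outright, with no cumulant matching needed; and every correction diagram surviving the projection $p$ corresponds to a bi-non-crossing partition in which the two positions occupied by $S_{C,i}$ and $S_{D,i}$ lie in different blocks, hence lies outside $BNC_{\text{ffb}}(\hat{\chi})$ and vanishes term by term by Theorem \ref{vanishingffbpartitions}. With the letters split, no cancellation among corrections is required; with them whole, it is, and you have no control over the constants $c_{D,S}$ with which to establish it.
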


\begin{proof}
    This argument compares an arbitrary moment in the family $\Gamma$ with a corresponding free-free-Boolean represented moment as in the definition of free-free-Boolean independence. To do so, we set up several notations. 

    For each $k \in K$ let $(\X_k, \osc{\X}_k, p_k)$ be a copy of the $B$-$B$-bimodule with specified $B$-projection that appears in Theorem \ref{faithfulbimodule} and let $\ell_k : A_k^\ell \to \L_\ell(\X_k)$, $r_k : A_k^r \to \L_r(\X_k)$, and $m_k : A_k^b := \text{alg}(C_k'D_k') \to \L(\X_k)$ all be defined as their corresponding restrictions of the map $\theta$ as defined in Theorem \ref{faithfulbimodule}. Let $(\X, \osc{\X}, p)$ be the reduced free product of the family $\{(\X_k, \osc{\X}_k, p_k)\}_{k \in K}$, let $\lambda_k$ and $\rho_k$ denote the corresponding left and right regular representations respectively.
    
    Henceforth, let $n \in \bN$, $\hat{\chi} : \{1, \ldots, n\} \to \{\ell, r,b\}$, $\hat{\epsilon} : \{1, \ldots, n\} \to K$, and $Z_1, \ldots, Z_n \in \A$ such that $Z_i \in A_{\hat{\epsilon}(i)}^{\hat{\chi}(i)}$. For those $i$ such that $\hat{\chi}(i) = b$, we restrict, without loss of generality, the $Z_i$ to be of the form $Z_i = S_{C,i}S_{D,i}$ where $S_{C,i}\in C_{\hat{\epsilon}(i)}'$ and $S_{D,i} \in D_{\hat{\epsilon}(i)}'$. 

    Now let $m = n + |\hat{\chi}^{-1}(\{b\})|$ and $f : \{1 ,\ldots, n\} \to \{1, \ldots, m\}$ be defined by 
    \[ f(i) = i + |\hat{\chi}^{-1}(\{b\}) \cap [1, i-1]|.\]
    
    We want to split up the Boolean factors so that we can make use of bi-freeness and our LR-diagram calculus. So with this idea, define for each $j \in \{1, \ldots, m\}$ 
    \[ T_j := \begin{cases} Z_{f^{-1}(j)} & \text{ if } j \in f(\hat{\chi}^{-1}\{\ell, r\}) \\ 
    S_{C,i} &\text{ if } i \in \hat{\chi}^{-1}(\{b\}) \text{ and } f(i) = j \\
    S_{D,i} & \text{ if } i \in \hat{\chi}^{-1}(\{b\}) \text{ and } f(i) + 1 = j 
    \end{cases}.\]

    For convenience, also let $\chi : \{1, \ldots, m\} \to \{\ell, r\}$ be the $\ell$-$r$-replacement of $\hat{\chi}$, and let $\epsilon : \{1,\ldots, m\} \to K$ be defined from $\hat{\epsilon}$ by $\epsilon(f(i)) = \hat{\epsilon}(i)$, and if $j \in \{1, \ldots, m\} \setminus f(\{1,\ldots, n\})$, let $\epsilon(j) = \epsilon(j-1)$ so that $\epsilon(f(i)) = \epsilon(f(i) + 1)$ whenever $i \in \hat{\chi}^{-1}(\{b\})$.
    
    Furthermore, for each $1 \leq i \leq m$ let
    \[ \mu_i(T_i) = \begin{cases} \lambda_{\epsilon(i)}(\ell_{\epsilon(i)}(T_i)) & \text{if } \chi(i) = \ell \\ \rho_{\epsilon(i)}(r_{\epsilon(i)}(T_i)) & \text{if } \chi(i) = r \end{cases}, \quad \text{ and } \quad  \mu_i'(T_i) = \begin{cases} P_{\epsilon(i)}\mu_i(T_i) & \text{if } i \in f(\hat{\chi}^{-1}(\{b\}) \\
    \mu_i(T_i) &\text{otherwise} \end{cases}, \] 
    and
    \[\tilde{\mu_i}(Z_i) = \begin{cases} \lambda_{\epsilon(i)}(\ell_{\epsilon(i)}(Z_i)) & \text{ if } \hat{\chi}(i) = \ell \\ \rho_{\epsilon(i)}(r_{\epsilon(i)}(Z_i)) & \text{ if } \hat{\chi}(i) = r \\ P_{\epsilon(i)}\lambda_{\epsilon(i)}(m_{\epsilon(i)}(Z_i))P_{\epsilon(i)} & \text{ if } \hat{\chi}(i) = b \end{cases}. \]

    With all of our setup in hand, we note that the $Z_i$ are chosen to correspond to an arbitrary moment of $\Gamma$ for which we would like to compare with the corresponding free-free-Boolean moment. That is, our proof will be complete if we can demonstrate that  
    \[ E_\A(Z_1 \cdots Z_n) = E_{\L(\X)}(\tilde{\mu}_1(Z_1) \cdots \tilde{\mu}_n(Z_n)). \]
    
    The elements $T_i$ will help us form a bridge between these two moments. To see how, first note that by definition of the $T_i$, we have $Z_1 \cdots Z_n = T_1 \cdots T_m$, so that
    \begin{equation} \label{ZtoT}
        E_\A(Z_1 \cdots Z_n) = E_\A(T_1\cdots T_m)
    \end{equation}
    Since the family $\{(\text{alg}(A_k^\ell, C_k'), \text{alg}(A_k^r, D_k'))\}_{k \in K}$ is bi-free over $B$, we have 
    \begin{equation} \label{EATtoELT}
        E_\A(T_1 \cdots T_m) = E_{\L(\X)}(\mu_1(T_1) \cdots \mu_m(T_m)) = p\mu_1(T_1)\cdots\mu_m(T_m)1_B.
    \end{equation}

    Next, for each $i \in \hat{\chi}^{-1}(\{b\})$, we have
    \begin{align*}
        \mu_{f(i)}'(T_{f(i)})\mu_{f(i)+1}'(T_{f(i)+1}) &= P_{\epsilon(f(i))}\mu_{f(i)}(T_{f(i)})\mu_{f(i)+1}(T_{f(i)+1}) \\ 
        &= P_{\epsilon(f(i))} \lambda_{\epsilon(f(i))}(\theta(S_{C,i}))\rho_{\epsilon(f(i)+1)}(\theta(S_{D,i})) \\
         &= P_{\epsilon(f(i))}\lambda_{\epsilon(f(i))}(\theta(S_{C,i}))\rho_{\epsilon(f(i))}(\theta(S_{D,i})) \\
         &= P_{\epsilon(f(i))}\lambda_{\epsilon(f(i))}(\theta(S_{C,i}))\lambda_{\epsilon(f(i))}(\theta(S_{D,i})) &\text{(Lemma \ref{boolprojfacts})}\\
         &= P_{\epsilon(f(i))}\lambda_{\epsilon(f(i))}(\theta(S_{C,i}S_{D,i})) \\
         &= P_{\epsilon(f(i))}\lambda_{\epsilon(f(i))}(m_i(Z_i))P_{\epsilon(f(i))} \\
         &=  P_{\hat{\epsilon}(i)}\lambda_{\hat\epsilon(i)}(m_i(Z_i))P_{\hat\epsilon(i)} \\
         &= \tilde{\mu}_i(Z_i),
    \end{align*}
    where we made use of Lemma \ref{boolprojfacts} to replace the $\rho_{\epsilon(f(i)+1)}$ with $\lambda_{\epsilon(f(i)+1)}$ (which is the same as $\lambda_{\epsilon(f(i))}$), and to commute those operators with the projections in the antepenultimate equality. From this we can see that, in fact,
    \begin{equation} \label{muprimeTstotildeZ}
        \mu_1'(T_1) \cdots \mu_m'(T_m) = \tilde{\mu}_1(Z_1)\cdots \tilde{\mu}_n(Z_n).
    \end{equation}

    For each $i \in \hat{\chi}^{-1}(\{b\})$, form the sets
    
    \[S_{f(i)} = LR^{\text{lat}}(\chi|_{\{f(i), \ldots, m\}},\epsilon|_{\{f(i),\ldots, m\}},\epsilon(f(i)))^c \]
    Then by Lemma \ref{keyprojLRlemma}, there is some $S \subseteq \bigcup_{i \in \hat{\chi}^{-1}(\{b\})} S_{f(i)}^\chi$ and constants $c_{D,S}$ such that
    \begin{equation} \label{LRdecompofTs}
        \mu_1(T_1)\cdots \mu_n(T_n)1_B = \mu_1'(T_1) \cdots \mu_n'(T_n)1_B + \sum_{D \in S}c_{D,S}E_D(\mu_1(T_1), \ldots, \mu_n(T_n)).
    \end{equation}

    Now from equation (\ref{muprimeTstotildeZ}), we only need to show that 
    \begin{equation} \label{Eofsuperfluousis0}
        E_{\L(\X)} \left(\sum_{D \in S}c_{D,S}E_D(\mu_1(T_1), \ldots, \mu_m(T_m))\right) = 0.
    \end{equation}
    It is immediate from Remark \ref{wherearetheLRpieces} that if $D \in LR^{\text{lat}}(\chi,\epsilon) \setminus LR_0^{\text{lat}}(\chi,\epsilon)$, then
    \[ E(c_{D,S}E_D(\mu_1(T_1), \ldots, \mu_m(T_m))) = 0,\]
    so we only need to consider those diagrams $D$ which correspond to partitions $\pi \in BNC(\chi)$ with $\pi \leq \epsilon$. To this end, let $D \in S = \bigcup_{j = 1}^\ell S_{i_j}^\chi \cap LR_0^{\text{lat}}(\chi,\epsilon)$ and let $\pi$ be its corresponding bi-non-crossing partition. This means that there is some $i \in \hat{\chi}^{-1}(\{b\})$ such that $D$ is a $\chi$-extension of a diagram in $S_{f(i)}$. But by definition of $S_{f(i)}$, $T_{f(i)}$ is not in the same block as $T_{f(i) + 1}$. So it remains true that any $\chi$-extension of such a diagram also leaves $T_{f(i)}$ and $T_{f(i)+1}$ disconnected. Thus, by Theorem \ref{vanishingffbpartitions} and bi-free independence, it follows that 
    \[  E_D(\mu_1(T_1), \ldots, \mu_m(T_m))= \E_\pi(T_1, \ldots, T_m) = 0.\]
    Combining these observations grants us equation (\ref{Eofsuperfluousis0}).

    We conclude from equations (\ref{ZtoT}), (\ref{EATtoELT}), (\ref{LRdecompofTs}), (\ref{muprimeTstotildeZ}), and (\ref{Eofsuperfluousis0}), that 
    \[ E_\A(Z_1 \cdots Z_n) = E_{\L(\X)}(\tilde{\mu}_1(Z_1) \cdots \tilde{\mu}_n(Z_n)),\]
    granting us, by the arbitrary choice of moment, that the family $\Gamma$ is free-free-Boolean independent over $B$. 
\end{proof}

So after connecting free-free-Boolean independent families with bi-free structures, in the form of our ffb $B$-systems, we see that the free-free-Boolean cumulants can be defined in terms of the bi-free cumulants. Thus, any appropriate combinatorial tools from bi-free probability helps elucidate the combinatorics of free-free-Boolean independence. This diminishes the need to develop the tools of another lattice, such as the so-called \textit{interval-bi-non-crossing partitions} of Liu in \cite{liu17}.

\section{A brief final word on other multi-algebra independences}

By considering the case when $B = \bC$, we see that the work above condenses down to a result about the free-free-Boolean independence (without the amalgmation over $B$) as defined in \cite{liu17}. Indeed, for example, if $B = \bC$, then $B$-$B$-non-commutative probability spaces become simply non-commutative probability spaces. While Liu's definition of the free-free-Boolean independence in \cite{liu17} may appear different, we note that by taking a suitable free product of the algebras that appear in that definition, we can match that with the definition provided here. Thus the non-amalgamated setting yields corresponding results for free.

In \cite{Liu19}, a notion of free-Boolean independence was discussed, and studied in the amalgamated setting in \cite{liu+zhong}. For the amalgamated free-Boolean independence, we can note simply from the work above that if $\{ (A^\ell_k, A_k^r, A_k^b)\}_{k \in K}$ is free-free-Boolean independent over $B$, then the family $\{ (A_k^\ell, A_k^b)\}_{k \in K}$ is free-Boolean independent over $B$. In the non-amalgamated setting, when $B = \bC$, the family $\{(A_k^\ell, A_k^b)\}_{k \in K}$ would be remain free-Boolean independent. By restricting our attention to the family $\{(A_k^\ell, A_k^b)\}_{k \in K}$ in the above work, we get corresponding results for free-Boolean independence (amalgamated over $B$ or not) for free. So in general, free-Boolean independence can be studied in bi-free probability as well.

Liu also suggests a more general description of similar multi-algebra independences in \cite{Liu19}. In particular, looking at the definition of free-free-Boolean independence (Definition \ref{ffbdef}), the family 
\[ \{(\lambda_k(\ell_k(A_k^\ell)), \rho_k(r_k(A_k^r)), P_k(\lambda_k(m_k(A_k^b)))P_k)\}_{k \in K} \]
has implicit identity projections flanking the left and right acting algebras. Alongside the Boolean projections and identity projections, there are also \textit{monotone projections} (see \cite{Liu19}). By mixing and matching these projections, we can define other types of multi-algebra independence. For example, if we replace the Boolean projections with the monotone projections, we get a notion of free-free-monotone independence. 

In \cite{Sko14} Skoufranis also constructed a model for a pair of monotone independent algebras. It seems likely that extending this model with similar constructions performed in this work would allow one to study free-free-monotone (and hence free-monotone) independence for pairs of tuples of algebras through the lens of bi-free probability as well.

\printbibliography

\end{document}